\DeclareMathOperator{\Tr}{Tr}
\newcommand{\lp}[1]{P_{#1}^{\gamma}}
\newcommand{\Star}[1]{\widehat{K}_{1,#1}}
\title{State Transfer in Complex Quantum Walks}
\author{
Antonio Acuaviva\\{Universidad Complutense de Madrid}
\and 
Ada Chan\\{York University}
\and 
Summer Eldridge\\{University of Toronto}
\and 
Chris Godsil\\{University of Waterloo}
\and 
Matthew How-Chun-Lun\\{McMaster University}
\and 
Christino Tamon\\{Clarkson University}
\and 
Emily Wright\\{Queen's University}
\and 
Xiaohong Zhang\\{University of Waterloo}
}
\author[1]{Antonio Acuaviva}
\author[2]{Ada Chan}
\author[3]{Summer Eldridge}
\author[4]{Chris Godsil}
\author[5]{Matthew How-Chun-Lun}
\author[6]{Christino Tamon}
\author[7]{Emily Wright}
\author[8]{Xiaohong Zhang}
\affil[1]{Department of Mathematics, Universidad Complutense de Madrid}
\affil[2]{Department of Mathematics and Statistics, York University}
\affil[3]{Department of Mathematics, University of Toronto}
\affil[4]{Department of Combinatorics and Optimization, University of Waterloo}
\affil[5]{Department of Mathematics, McMaster University}
\affil[6]{Department of Computer Science, Clarkson University}
\affil[7]{Department of Mathematics, Queen's University}
\affil[8]{Centre de recherches math\'{e}matiques, Universit\'{e} de Montr\'{e}al}
\date{\today}
\begin{document}
\maketitle

\begin{abstract}
Given a graph with Hermitian adjacency matrix $H$, perfect state transfer occurs from vertex $a$ to vertex $b$ 
if the $(b,a)$-entry of the unitary matrix $\exp(-iHt)$ has unit magnitude for some time $t$.
This phenomenon is relevant for information transmission in quantum spin networks
and is known to be monogamous under real symmetric matrices.
We prove the following results:
\begin{itemize}
\item For oriented graphs (whose nonzero weights are $\pm i$), 
    the oriented $3$-cycle and the oriented edge are the only graphs where perfect state transfer occurs between every pair of vertices.
    This settles a conjecture of Cameron \etal \cite{USTG}.
    On the other hand, we construct an infinite family of oriented graphs 
    with perfect state transfer between any pair of vertices on a subset of size four.
\item There are infinite families of Hermitian graphs with one-way perfect state transfer,
    where perfect state transfer occurs without periodicity. 
    In contrast, perfect state transfer implies periodicity whenever the adjacency matrix has algebraic entries 
    (see Godsil \cite{RST}).
\item There are infinite families with non-monogamous pretty good state transfer in rooted graph products.
    In particular, we generalize known results on double stars (due to Fan and Godsil \cite{Fan_2012})
    and on paths with loops (due to Kempton, Lippner and Yau \cite{kly17}).
    The latter extends the experimental observation of quantum transport (made by Zimbor\'{a}s \etal \cite{zfkwlb})
    and shows non-monogamous pretty good state transfer can occur amongst distant vertices.
\end{itemize}
\end{abstract}

\section{Introduction}

Given a graph $X=(V,E)$ with adjacency matrix $A$, a continuous-time quantum walk on $X$ is defined
by the time-dependent unitary matrix $U(t) = e^{-iAt}$. This natural quantum generalization of
continuous-time random walks is important for designing quantum algorithms. Childs \etal \cite{Childs_2003}
showed that a continuous-time quantum walk algorithm provides an exponential time speedup for an
explicit search problem on graphs.
Subsequently, Childs \cite{c09} showed that continuous-time quantum walk is a universal model of
quantum computation.

Our focus in this paper is motivated by Bose \cite{b03} who studied quantum communication via
continuous-time quantum walk on graphs. We say that there is {\em pretty good state transfer} in a graph $X$
from vertex $a$ to vertex $b$ if for any $\epsilon > 0$, there is a time $t$ so that
$\norm{U(t)e_a - \gamma e_b} \le \epsilon$ where $\gamma$ is a phase factor.
Here, $e_a$ denotes the unit vector with $1$ at position $a$ and $0$ elsewhere; similarly for $e_b$.
If $\epsilon = 0$ is achievable, we say there is {\em perfect} state transfer in $X$ from $a$ to $b$
at time $t$.

Kay \cite{k11} proved a monogamy property for perfect state transfer on graphs with real symmetric
adjacency matrices: if there is perfect state transfer from $a$ to $b$ and from $a$ to $c$ then $b=c$.
In contrast, Cameron \etal \cite{USTG} showed that there are {\em oriented} graphs
(whose adjacency matrices are Hermitian with $\pm i$ nonzero entries) where state transfer occurs between
every pair of vertices. This latter property is called {\em universal} state transfer.
Their primary examples are oriented cycles of prime order with universal pretty good
state transfer. A notable exception is the oriented $3$-cycle which exhibits universal
{\em perfect} state transfer.

It was conjectured in \cite{USTG} that the oriented $K_2$ and $3$-cycle are the only oriented
graph with universal perfect state transfer. We prove their conjecture in this work. This confirms 
that universal perfect state transfer is an extremely rare phenomenon in oriented graphs.
On the other hand, there are known infinite families of graphs with universal perfect state transfer
but with adjacency matrices that are Hermitian matrices with no restriction on the entries
(see Connelly \etal \cite{UPST}).  We call these Hermitian graphs.

Godsil and Lato \cite{gl} proved a strong characterization of perfect state tranfer in oriented graphs 
and observed that perfect state transfer always implies periodicity (by the Gelfond-Schneider theorem).
In fact, Godsil \cite{RST} had observed the latter property holds for any adjacency matrix
with algebraic entries. 
Our next observation shows that the latter assumption is necessary to guarantee periodicity.
We construct the first infinite family of Hermitian graphs with {\em one-way} perfect state transfer,
where perfect state transfer occurs {\em without} periodicity.
These examples also exhibit a {\em one-time} perfect state transfer property
where perfect state transfer occurs at a single unique time (and never to repeat again).

Godsil and Lato \cite{gl} also introduced a relaxation of universal perfect state transfer
called {\em multiple perfect state transfer}.
We say a graph $X$ has {\em multiple} state transfer on a subset $S \subset V(X)$
of vertices, with $|S| \geq 3$, if state transfer occurs between every pair of vertices of $S$.
An explicit example of a $8$-vertex circulant with multiple perfect state tranfer was given in \cite{gl},
but it was not clear if there are more examples sharing the same properties.
We construct the first infinite family of oriented graphs with multiple perfect state transfer
(which contains the aforementioned $8$-vertex circulant as a special case).
This shows that, unlike universal perfect state transfer, multiple perfect state transfer
is not an extremely rare phenomenon.

It is known that perfect state transfer is closed under the Cartesian graph product.
In this work, under mild assumptions, we show that multiple state transfer 
is closed under the rooted graph product (see Godsil and McKay \cite{McKay_1978}).
First, we prove a complete characterization of pretty good state transfer on the rooted product
of the oriented $3$-cycle with stars $K_{1,m}$.
This generalizes a result of Fan and Godsil \cite{Fan_2012} on the double stars.
Next, we consider rooted product with single-looped paths instead of stars.
Let $X$ be a $n$-vertex circulant with universal perfect state transfer and let $P_m^\gamma$ be
a $m$-vertex path with a self-loop of weight $\gamma$ at one of its endpoints.
We prove that the rooted product $X \circ P_m^\gamma$ has multiple pretty good state transfer between
every pair of vertices with self-loop provided $\gamma$ is transcendental.
This generalizes a result of Kempton, Lippner and Yau \cite{kly17} and shows the power of loops
to facilitate multiple state transfer among distant vertices. 
In the special case when $X$ is the oriented $3$-cycle, our result strengthens the
experimental observations in Zimbor\'{a}s \etal \cite{zfkwlb} (with the help of self-loops).

\section{Preliminary}

Given a graph $X$ and an associated Hermitian matrix $H$, the transition matrix of its continuous-time quantum walk is 
\begin{equation*}
U(t) = e^{-\ii t H}.
\end{equation*}
We call $X$ a {\em Hermitian graph} if we do not assume any additional condition on the entries of $H$.
For the special case where $X$ is an oriented graph, we use the Hermitian matrix $H$ defined as
\begin{equation*}
H_{a,b} = 
\begin{cases}
\ii & \text{if there is an arc from $a$ to $b$ in $X$,}\\
-\ii & \text{if there is an arc from $b$ to $a$ in $X$, and}\\
0 & \text{if there is no arc between $a$ and $b$ in $X$.}\\
\end{cases}
\end{equation*}

Let $\theta_1, \ldots, \theta_d$ be distinct eigenvalues of $H$.
For $r=1,\ldots, d$, let $E_r$ denote the orthogonal projection matrix onto the $\theta_r$-eigenspace of $H$.  
Then $E_r E_s = \delta_{r,s} E_r$ and $\sum_r E_r=I$.  The spectral decomposition $H=\sum_{r} \theta_r E_r$ gives
\begin{equation*}
U(t) = \sum_{r=1}^d e^{-\ii t \theta_r}E_r.
\end{equation*}
Given a unit vector $v\in \mathbb{C}^n$,  the system with initial state $v$ evolves to 
 $U(t)v=\sum_r e^{-it\theta_r}E_rv$ at time $t$. 
Therefore the pair $(\theta_r, E_r)$ with $E_rv=0$ does not influence the state.
We define the \textsl{eigenvalue support} of the vector $v$ to be 
$\Phi_v=\{\theta_r: E_r v \neq 0\}$. 
In the case $v=e_a$ for some vertex $a$, 
we also call $\Phi_{e_a}$ ($\Phi_a$ for short) the eigenvalue support of $a$. 

{\em Perfect state transfer} from vertex $a$ to vertex $b$ occurs at time $\tau$ if 
\begin{equation}
\label{Eqn:defPST}
U(\tau) e_a=\alpha e_b,
\end{equation}
for some phase factor $\alpha$. 
If $a=b$ then we say the quantum walk is {\em periodic at $a$}.

Multiplying $E_r$ to both sides of Equation~(\ref{Eqn:defPST}) gives
\begin{equation}
\label{Eqn:PST}
e^{-\ii \tau \theta_r} E_r e_a = \alpha E_r e_b.
\end{equation}
Hence, for $r=1,\ldots,d$, there exists $q_r(a,b) \in [0, 2\pi)$ such that
\begin{equation}
\label{Eqn:Quarrel}
E_r e_a = e^{\ii q_r(a,b)} E_r e_b.
\end{equation}
We say the vertices $a$ and $b$ are {\em strongly cospectral} when this condition is satisfied, and 
call $q_r(a,b)$  the {\em quarrel from $a$ to $b$ relative to the eigenvalue $\theta_r$}.
Note that strongly cospectral vertices have the same eigenvalue support.

We study perfect state transfer in oriented graphs and in Hermitian graphs in Sections~\ref{Section:PSTOriented} and \ref{Section:PSTHermitian}.
We give here a characterization of perfect state transfer in Hermitian graphs.
\begin{theorem}
\label{Thm:HermitianPST}
Perfect state transfer occurs from $a$ to $b$ in a Hermitian graph $X$
if and only if
\begin{enumerate}[i.]
\item
\label{Cond:HermitionPST1}
$a$ and $b$ are strongly cospectral vertices with quarrels $q_r(a,b)$, for $\theta_r\in \Phi_a$, and
\item
\label{Cond:HermitionPST2}
for $\theta_r, \theta_s, \theta_h, \theta_{\ell}\in \Phi_a$ such that $h\neq \ell$,
there exist integers $m_{r,s}$ and $m_{h,\ell}$ satisfying
\begin{equation*}
\frac{\theta_r-\theta_s}{\theta_h-\theta_{\ell}} = \frac{q_r(a,b)-q_s(a,b)+2m_{r,s}\pi}{q_h(a,b)-q_{\ell}(a,b)+2m_{h,\ell}\pi}.
\end{equation*}
\end{enumerate}
\end{theorem}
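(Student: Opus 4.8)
The plan is to characterize when the time-independent phase condition $U(\tau)e_a = \alpha e_b$ can be satisfied by extracting a system of equations on the eigenvalues and quarrels, and then recognizing that solvability of that system is exactly the stated ratio condition. First I would establish the forward direction. Assuming perfect state transfer at time $\tau$, multiplying \eqref{Eqn:defPST} by each $E_r$ immediately gives \eqref{Eqn:PST}, and for $\theta_r \in \Phi_a$ (equivalently $\Phi_b$, since the supports must coincide) this forces $E_r e_a$ and $E_r e_b$ to be parallel with a unit-modulus ratio, i.e.\ strong cospectrality with quarrels $q_r(a,b)$; that is condition~\ref{Cond:HermitionPST1}. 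Comparing \eqref{Eqn:PST} with \eqref{Eqn:Quarrel} shows $e^{-\ii\tau\theta_r} = \alpha e^{-\ii q_r(a,b)}$ for every $\theta_r \in \Phi_a$, hence $\tau\theta_r + q_r(a,b) \equiv \arg(\alpha^{-1}) \pmod{2\pi}$ for all $r$. Subtracting the equation for $\theta_s$ from that for $\theta_r$ eliminates the unknown phase and yields $\tau(\theta_r - \theta_s) = q_s(a,b) - q_r(a,b) + 2\pi k_{r,s}$ for some integer $k_{r,s}$ (sign conventions to be fixed so they match the displayed formula), and likewise $\tau(\theta_h - \theta_\ell) = q_\ell(a,b) - q_h(a,b) + 2\pi k_{h,\ell}$. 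Dividing these two relations (the denominator being nonzero since $h \neq \ell$ and eigenvalues are distinct) cancels $\tau$ and gives exactly the ratio identity of condition~\ref{Cond:HermitionPST2}, with $m_{r,s} = \pm k_{r,s}$, $m_{h,\ell} = \pm k_{h,\ell}$.

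For the converse, assume conditions~\ref{Cond:HermitionPST1} and~\ref{Cond:HermitionPST2}. Fix one reference pair, say $\theta_1,\theta_2 \in \Phi_a$ with $\theta_1 \neq \theta_2$, and set $\tau = (q_2(a,b) - q_1(a,b) + 2\pi m_{1,2})/(\theta_1 - \theta_2)$ using the integers guaranteed by the hypothesis. The content of condition~\ref{Cond:HermitionPST2}, applied with $(\theta_h,\theta_\ell) = (\theta_1,\theta_2)$ and $(\theta_r,\theta_s)$ ranging over all pairs in $\Phi_a$, is precisely that for this choice of $\tau$ there is an integer $m_{r,s}$ with $\tau(\theta_r-\theta_s) = q_s(a,b)-q_r(a,b)+2\pi m_{r,s}$, i.e.\ $\tau\theta_r + q_r(a,b)$ is independent of $r$ modulo $2\pi$. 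Call this common value $-\beta$; then $e^{-\ii\tau\theta_r} = e^{\ii\beta} e^{-\ii q_r(a,b)}$ for every $\theta_r \in \Phi_a$. Now compute $U(\tau)e_a = \sum_r e^{-\ii\tau\theta_r} E_r e_a$; the terms with $E_r e_a = 0$ vanish, and for the surviving terms substitute $E_r e_a = e^{\ii q_r(a,b)} E_r e_b$ from \eqref{Eqn:Quarrel} to get $U(\tau)e_a = e^{\ii\beta}\sum_r E_r e_b = e^{\ii\beta} e_b$, which is perfect state transfer with phase $\alpha = e^{\ii\beta}$.

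The one genuinely delicate point is the bookkeeping: showing that condition~\ref{Cond:HermitionPST2}, which quantifies over \emph{all} quadruples $\theta_r,\theta_s,\theta_h,\theta_\ell$, is equivalent to the existence of a \emph{single} $\tau$ making $\tau\theta_r + q_r(a,b)$ constant mod $2\pi$ across the whole support. The forward direction produces such a $\tau$ and then the ratio condition is automatic; the subtlety is the converse, where one must check that committing to $\tau$ via one fixed pair does not over-constrain the other pairs --- but this is exactly what the ratio identity for the quadruple $(\theta_r,\theta_s,\theta_1,\theta_2)$ says, since it forces $\tau(\theta_r-\theta_s)$ to differ from $q_s(a,b)-q_r(a,b)$ by an integer multiple of $2\pi$. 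I would also remark that the case $|\Phi_a| = 1$ is degenerate (condition~\ref{Cond:HermitionPST2} is vacuous and $a = b$), so one may assume $|\Phi_a| \geq 2$; and one should note the sign ambiguity in matching $m_{r,s}$ to the integers appearing above is harmless since $m_{r,s}$ ranges over all of $\mathbb{Z}$. Everything else is the routine spectral-decomposition manipulation already set up in the preliminaries.
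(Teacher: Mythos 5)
Your proposal is correct and follows essentially the same route as the paper: multiply \eqref{Eqn:defPST} by the spectral idempotents to get strong cospectrality, reduce perfect state transfer at time $\tau$ to the system $\tau(\theta_r-\theta_s)=q_r(a,b)-q_s(a,b)+2m_{r,s}\pi$, and obtain the ratio identity by eliminating $\tau$. The only difference is that the paper compresses the converse into ``Condition~(ii) follows immediately,'' whereas you explicitly reconstruct $\tau$ from a reference pair and verify it works for all pairs (implicitly reading the integers $m_{r,s}$, $m_{h,\ell}$ as depending only on the pairs, which is also how the paper's chain of equivalences uses them).
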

\begin{proof}
From Equation~(\ref{Eqn:Quarrel}), we see that perfect state transfer from $a$ to $b$ implies they are strongly cospectral.

Suppose $a$ and $b$ are strongly cospectral with quarrel $q_r(a,b)$, for $\theta_r\in \Phi_a (=\Phi_b)$.
Then Equation~(\ref{Eqn:defPST}) holds if and only if for $\theta_r, \theta_s \in \Phi_a$,
\begin{equation}
\label{Eqn:HermitianPST}
\alpha = e^{\ii \left(q_r(a,b)-\tau \theta_r\right)}=e^{\ii \left(q_s(a,b)-\tau \theta_s\right)}.
\end{equation}
This is equivalent to
\begin{equation*}
e^{\ii \tau \left(\theta_r-\theta_s\right)} = e^{\ii \left(q_r(a,b)-q_s(a,b)\right)}
\end{equation*}
and 
\begin{equation*}
 \tau \left(\theta_r-\theta_s\right) = q_r(a,b)-q_s(a,b) + 2m_{r,s}\pi,
\end{equation*}
for some integer $m_{r,s}$.  Condition~(\ref{Cond:HermitionPST2}) follows immediately.
\end{proof}

We say
the {\em ratio condition on $\Phi_a$} holds if
\begin{equation}
\label{Eqn:Ratio}
\frac{\theta_r-\theta_s}{\theta_h-\theta_{\ell}} \in \QQ
\end{equation}
for $\theta_r, \theta_s, \theta_h, \theta_{\ell}\in \Phi_a$ such that $h\neq \ell$.

\begin{theorem}
\label{Thm:Periodicity}
In a Hermitian graph $X$, $a$ is periodic if and only if the ratio condition on $\Phi_a$ holds.
\end{theorem}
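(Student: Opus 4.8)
The plan is to reduce the statement to an elementary arithmetic fact about the eigenvalue support. Multiplying $U(\tau)e_a=\alpha e_a$ by each spectral idempotent $E_r$ shows that $a$ is periodic at time $\tau$ exactly when $e^{-\ii\tau\theta_r}=e^{-\ii\tau\theta_s}$ for all $\theta_r,\theta_s\in\Phi_a$, i.e.\ when $\tau(\theta_r-\theta_s)\in 2\pi\ZZ$ for all such $r,s$. (Alternatively one can invoke Theorem~\ref{Thm:HermitianPST} with $b=a$: a vertex is trivially strongly cospectral with itself, with every quarrel $q_r(a,a)=0$, so condition~(\ref{Cond:HermitionPST2}) specializes to precisely the ratio condition.) Thus everything comes down to showing that the system $\{\tau(\theta_r-\theta_s)\in 2\pi\ZZ\}$ has a nonzero real solution $\tau$ if and only if every ratio $(\theta_r-\theta_s)/(\theta_h-\theta_\ell)$ with $\theta_h\neq\theta_\ell$ lies in $\QQ$.

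For the forward direction, given such a $\tau\neq 0$, any distinct $\theta_h,\theta_\ell\in\Phi_a$ give $\tau(\theta_h-\theta_\ell)=2\pi m_{h,\ell}$ with $m_{h,\ell}$ a \emph{nonzero} integer (nonzero because $\tau\neq 0$ and $\theta_h\neq\theta_\ell$), while $\tau(\theta_r-\theta_s)=2\pi m_{r,s}$ with $m_{r,s}\in\ZZ$, so $(\theta_r-\theta_s)/(\theta_h-\theta_\ell)=m_{r,s}/m_{h,\ell}\in\QQ$. For the converse, if $|\Phi_a|\le 1$ then $U(\tau)e_a=e^{-\ii\tau\theta}e_a$ for the unique $\theta\in\Phi_a$, so $a$ is periodic and the ratio condition holds vacuously; otherwise fix distinct $\theta_1,\theta_2\in\Phi_a$, write each $(\theta_r-\theta_s)/(\theta_1-\theta_2)$ in lowest terms as $c_{r,s}/d_{r,s}$ with $d_{r,s}\ge 1$, set $N=\operatorname{lcm}_{r,s} d_{r,s}$ (a finite lcm since $\Phi_a$ is finite), and take $\tau=2\pi N/(\theta_1-\theta_2)$ (replace by $|\tau|$ if a positive time is desired). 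Then $\tau(\theta_r-\theta_s)=2\pi N\,c_{r,s}/d_{r,s}\in 2\pi\ZZ$ for all $r,s$, so the scalars $e^{-\ii\tau\theta_r}$ share a common value $\alpha$; using $E_re_a=0$ for $\theta_r\notin\Phi_a$ and $\sum_r E_r=I$ then gives $U(\tau)e_a=\sum_r e^{-\ii\tau\theta_r}E_re_a=\alpha e_a$.

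The only real content — and the one step that needs genuine care — is the converse's passage from \emph{pairwise} rationality of the spectral gaps to a \emph{single} time $\tau$ that resynchronizes all the phases $e^{-\ii\tau\theta_r}$ at once; clearing the (finitely many) denominators by their least common multiple does exactly this, and finiteness of $\Phi_a$ is what makes the lcm legitimate. The degenerate case $|\Phi_a|=1$ and the sign of $\tau$ each deserve a sentence but pose no obstacle.
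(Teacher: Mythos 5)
Your proof is correct, and your parenthetical remark --- setting $b=a$ in Theorem~\ref{Thm:HermitianPST} and noting that every quarrel $q_r(a,a)$ vanishes --- is exactly the paper's entire proof. Your main argument merely unfolds what that theorem already packages (the spectral reduction to $\tau(\theta_r-\theta_s)\in 2\pi\ZZ$ and the common-denominator choice of $\tau$ in the converse), so the two routes are essentially the same.
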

\begin{proof}
Note that $q_r(a,a)=0$ for $\theta_r\in \Phi_a$.  The result follows immediately from Theorem~\ref{Thm:HermitianPST}.
\end{proof}

In Section~\ref{Section:MPGST}, we consider a relaxation of perfect state transfer.  A graph has {\em pretty good state transfer} from $a$ to $b$ if,
for any $\varepsilon > 0$, there is a time $\tau$ satisfying
\begin{equation}
\label{Eqn:defPGST}
\vert U(\tau)_{a,b}\vert \geq 1-\varepsilon.
\end{equation}
Using the proof of Lemma~13.1 in \cite{g12}, we conclude that if there is pretty good state transfer from $a$ to $b$ then $a$ and $b$ are strongly cospectral.
From
\begin{equation*}
U(t)_{a,b} = \sum_{r=1}^d e^{-\ii t \theta_r}e_a^T E_re_b = \sum_{r=1}^d e^{\ii \left(q_r(a,b)-t\theta_r\right)}(E_r)_{b,b},
\end{equation*}
we see that there is pretty good state transfer from $a$ to $b$ if and only if  for any $\epsilon >0$, there exists $\tau>0$ and $\delta_{\epsilon} \in \RR$ such that
\begin{equation*}
\vert \tau \theta_r - q_r(a,b) - \delta_{\epsilon} \vert < \epsilon \pmod{2\pi}, \quad \text{for $r\in \Phi_a$.}
\end{equation*}

\begin{theorem} (Kronecker \cite{lz})
\label{Thm:Kron}
Let $\theta_1,\ldots,\theta_d$ and $q_1,\ldots,q_d$ be arbitrary real numbers.
For any $\epsilon>0$, the system of inequalities
\begin{equation*}
	|\theta_r\tau - q_r| < \epsilon \pmod{2\pi}, 
	\ \ \
	r=1,\ldots,d
\end{equation*}
admits a solution for $\tau$ if and only if, for all set of  integers $l_1,\ldots,l_d$, 
\begin{equation*}
	l_1\theta_1 + \ldots + l_d\theta_d = 0
\end{equation*}
implies
\begin{equation*}
	l_1 q_1 + \ldots + l_d q_d = 0\pmod{2\pi}.
\end{equation*}
\end{theorem}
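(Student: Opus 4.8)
The plan is to translate the statement into a density assertion about a one-parameter subgroup of a torus and then settle it with Pontryagin duality.

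Write $\mathbb{T}=\RR/2\pi\mathbb{Z}$, equip $\mathbb{T}^d$ with the metric $\rho(x,y)=\max_r\mathrm{dist}(x_r-y_r,2\pi\mathbb{Z})$, set $\theta=(\theta_1,\dots,\theta_d)$ and $q=(q_1,\dots,q_d)$, and consider the one-parameter subgroup
\[
H=\{\,(\theta_1 t,\dots,\theta_d t)\bmod 2\pi:\ t\in\RR\,\}\subseteq\mathbb{T}^d .
\]
For a fixed $\epsilon$ the system in the theorem has a solution $\tau$ exactly when $q\bmod 2\pi$ lies within $\epsilon$ of $H$ in the metric $\rho$, so solvability for \emph{every} $\epsilon>0$ is precisely the statement $q\bmod 2\pi\in\overline{H}$. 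Hence it suffices to identify the closure as
\[
\overline{H}=K:=\{\,x\in\mathbb{T}^d:\ \textstyle\sum_r l_rx_r\equiv 0\pmod{2\pi}\ \text{for all }l\in\Lambda\,\},\qquad \Lambda:=\{\,l\in\mathbb{Z}^d:\ \textstyle\sum_r l_r\theta_r=0\,\},
\]
because $q\bmod 2\pi\in K$ is exactly the asserted implication ``$\sum_r l_r\theta_r=0\Rightarrow\sum_r l_rq_r\equiv 0\pmod{2\pi}$''.

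The characters of $\mathbb{T}^d$ are $\chi_l(x)=\exp\!\big(\ii\sum_r l_rx_r\big)$ for $l\in\mathbb{Z}^d$, and $\chi_l$ is trivial on $H$ iff $\exp(\ii t\sum_r l_r\theta_r)=1$ for all $t\in\RR$, i.e.\ iff $l\in\Lambda$. Since each such $\chi_l$ is continuous and vanishes on $\overline{H}$, the inclusion $\overline{H}\subseteq K$ is immediate, and this already gives the ``only if'' direction directly: if $\tau$ solves the system to within $\epsilon$ then $\theta_r\tau=q_r+\delta_r+2\pi k_r$ with $k_r\in\mathbb{Z}$ and $|\delta_r|<\epsilon$, so for $l\in\Lambda$ we get $0=\sum_r l_r\theta_r\tau=\sum_r l_rq_r+\sum_r l_r\delta_r+2\pi\sum_r l_rk_r$, whence $\sum_r l_rq_r\equiv-\sum_r l_r\delta_r\pmod{2\pi}$ with $|\sum_r l_r\delta_r|\le\epsilon\sum_r|l_r|$; letting $\epsilon\to 0$ forces $\sum_r l_rq_r\equiv 0\pmod{2\pi}$.

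The substantive content is the reverse inclusion $K\subseteq\overline{H}$. Suppose instead there is $x_0\in K\setminus\overline{H}$. As $\overline{H}$ is a closed subgroup, $\mathbb{T}^d/\overline{H}$ is a nontrivial compact abelian group in which the image of $x_0$ is nonzero; since characters separate points on a compact abelian group (Pontryagin duality — concretely, $\mathbb{T}^d/\overline{H}$ is isomorphic to $\mathbb{T}^k$ times a finite abelian group by the structure of closed subgroups of tori), some character $\psi$ of $\mathbb{T}^d/\overline{H}$ is nontrivial on that image. Pulling $\psi$ back along $\mathbb{T}^d\to\mathbb{T}^d/\overline{H}$ yields a character $\chi_l$ of $\mathbb{T}^d$ that is trivial on $\overline{H}\supseteq H$, hence $l\in\Lambda$, yet $\chi_l(x_0)\ne 1$, i.e.\ $\sum_r l_r(x_0)_r\not\equiv 0\pmod{2\pi}$, contradicting $x_0\in K$. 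Therefore $\overline{H}=K$, which is what we needed.

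I expect the one genuine obstacle to be exactly this last inclusion, namely the fact that $\overline{H}$ is cut out by the characters vanishing on it. If one prefers to avoid invoking duality, the alternative is to extract a $\QQ$-basis from $\theta_1,\dots,\theta_d$, express the remaining $\theta_r$ through it, clear denominators to obtain integer relations lying in $\Lambda$ and use them to pin down the corresponding $q_r$ modulo $2\pi$, and then appeal to Weyl's equidistribution theorem in the rationally independent case to conclude that $H$ is dense in the relevant subtorus; but the delicate bookkeeping with denominators modulo $2\pi$ in that reduction is precisely what the torus/duality formulation makes disappear, so I would organise the argument around $\overline{H}$.
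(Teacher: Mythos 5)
The paper does not actually prove this statement: it is quoted as Kronecker's classical approximation theorem with a citation to the literature, so there is no in-paper argument to compare against. Your proposal is a correct and complete proof of the quoted result, and it follows the standard modern route: reformulate simultaneous approximability for every $\epsilon>0$ as the assertion that $q\bmod 2\pi$ lies in the closure $\overline{H}$ of the one-parameter subgroup $t\mapsto(\theta_1t,\dots,\theta_dt)$ of $\mathbb{T}^d$, observe that the characters of $\mathbb{T}^d$ vanishing on $H$ are exactly the $\chi_l$ with $l$ in the relation lattice $\Lambda$, and then use the duality fact that a closed subgroup of a compact abelian group is the common kernel of the characters vanishing on it to conclude $\overline{H}=K$. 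Both inclusions are argued correctly (the elementary $\epsilon\to 0$ computation for $\overline{H}\subseteq K$ is fine, since for fixed $l$ the class of $\sum_r l_rq_r$ in $\mathbb{R}/2\pi\mathbb{Z}$ is within $\epsilon\sum_r|l_r|$ of $0$ for every $\epsilon$), and you correctly isolate the one nontrivial ingredient, namely that characters of $\mathbb{T}^d/\overline{H}$ separate points. The only caveat worth recording is that your reading of the theorem quantifies the solvability over all $\epsilon>0$ simultaneously; this is the intended (and standard) reading, since for a single fixed $\epsilon$ the ``only if'' direction would be false, but it is worth stating explicitly since the theorem as typeset is ambiguous on this point.
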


\begin{theorem}
\label{Thm:PGST}
Let $X$ be Hermitian graph with eigenvalues $\theta_1, \ldots, \theta_d \in \Phi_a$.
Then $X$ has pretty good state transfer from $a$ to $b$ if and only if
the following conditions hold.
\begin{enumerate}[i.]
\item
\label{Cond:PGST1}
The vertices $a$ and $b$ are strongly cospectral with quarrels $q_r(a,b)$, for $r=1,\ldots,d$.
\item
\label{Cond:PGST2}
There exists $\delta\in \RR$ such that, for all integers $l_1,\ldots,l_d$ satisfying
$\sum_{r=1}^d l_r \theta_r = 0$, we have
\begin{equation}
\label{Eqn:PGSTK2}
\sum_{r=1}^d l_r \left(q_r(a,b)+\delta\right) = 0 \pmod{2\pi}.
\end{equation}
\end{enumerate}
\end{theorem}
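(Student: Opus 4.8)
The plan is to reduce Theorem~\ref{Thm:PGST} to Kronecker's theorem (Theorem~\ref{Thm:Kron}) applied to the phases appearing in the diagonal of $U(t)_{a,b}$. First I would record, exactly as in the discussion preceding Theorem~\ref{Thm:Kron}, that pretty good state transfer from $a$ to $b$ forces strong cospectrality (by the argument in Lemma~13.1 of \cite{g12}), which gives Condition~(\ref{Cond:PGST1}) and lets us write $E_r e_a = e^{\ii q_r(a,b)} E_r e_b$. Using this, the key identity
\begin{equation*}
U(t)_{a,b} = \sum_{r=1}^d e^{\ii (q_r(a,b) - t\theta_r)} (E_r)_{b,b}
\end{equation*}
shows that $|U(t)_{a,b}|$ can be made arbitrarily close to $1$ if and only if all the unit complex numbers $e^{\ii(q_r(a,b) - t\theta_r)}$ can be simultaneously forced arbitrarily close to a common phase $e^{\ii\delta_\varepsilon}$ (here one uses that $\sum_r (E_r)_{b,b} = 1$ and each $(E_r)_{b,b} \geq 0$, so the only way the triangle inequality is nearly tight is if all terms nearly align). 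This is precisely the condition
\begin{equation*}
|\tau\theta_r - q_r(a,b) - \delta_\varepsilon| < \varepsilon \pmod{2\pi}, \qquad r = 1,\ldots,d,
\end{equation*}
already displayed in the excerpt.

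Next I would handle the free phase $\delta$. Since $\gamma$ in the definition of PGST is an unconstrained phase factor, we are allowed to translate all the target values $q_r(a,b)$ by a common constant; equivalently, replacing $q_r(a,b)$ by $q_r(a,b) + \delta$ for a fixed $\delta \in \RR$ does not change whether PGST occurs. So PGST from $a$ to $b$ holds if and only if there exists $\delta \in \RR$ such that for every $\varepsilon > 0$ the system $|\tau\theta_r - (q_r(a,b)+\delta)| < \varepsilon \pmod{2\pi}$ has a solution $\tau$. Applying Theorem~\ref{Thm:Kron} with the target reals $q_r := q_r(a,b) + \delta$ converts this into: for all integer tuples $(l_1,\ldots,l_d)$ with $\sum_r l_r\theta_r = 0$ we have $\sum_r l_r(q_r(a,b)+\delta) = 0 \pmod{2\pi}$, which is exactly Condition~(\ref{Cond:PGST2}). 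Conversely, if both conditions hold, strong cospectrality gives the identity for $U(t)_{a,b}$ and Kronecker's theorem produces, for each $\varepsilon$, a time $\tau$ making every phase within $\varepsilon$ of $\delta$, whence $|U(\tau)_{a,b}|$ is within $O(\varepsilon)$ of $1$.

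The one genuinely delicate point — the main obstacle — is the "only if" direction of the reduction from $|U(t)_{a,b}| \to 1$ to simultaneous approximation of the phases: one must argue that near-saturation of $\bigl|\sum_r e^{\ii\phi_r(t)}(E_r)_{b,b}\bigr| \leq \sum_r (E_r)_{b,b} = 1$ actually forces each $e^{\ii\phi_r(t)}$ (for those $r$ with $(E_r)_{b,b} > 0$, i.e.\ $r \in \Phi_a$) to lie close to a common direction, with a quantitative $\varepsilon$-to-$\varepsilon$ dependence, and that the $r \notin \Phi_a$ terms can be ignored since $E_r e_a = 0$ there. This is a standard convexity/strict-convexity estimate for the unit disk, but it is the step where the weights $(E_r)_{b,b}$ and the restriction to $\Phi_a$ must be used carefully; everything else is bookkeeping on top of Theorems~\ref{Thm:Kron} and the strong cospectrality lemma.
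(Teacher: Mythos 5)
Your overall route is the same as the paper's: the paper's proof is a one-line appeal to Proposition~4.01 of \cite{vanBommel} together with Theorem~\ref{Thm:Kron}, and the reduction you describe (strong cospectrality via Lemma~13.1 of \cite{g12}, the identity $U(t)_{a,b}=\sum_r e^{\ii(q_r(a,b)-t\theta_r)}(E_r)_{b,b}$, and near-tightness of the triangle inequality forcing the phases to align) is exactly the discussion the paper places immediately before the theorem statement. The convexity step you single out as the delicate point is fine, and is in fact made easy here by the hypothesis that every $\theta_r$ lies in $\Phi_a$, so every weight $(E_r)_{b,b}=\Vert E_re_b\Vert^2$ is strictly positive.

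The genuine gap is in your treatment of $\delta$. The reduction yields the statement ``for every $\epsilon>0$ there exist $\tau$ and $\delta_\epsilon$ with $\vert\tau\theta_r-q_r(a,b)-\delta_\epsilon\vert<\epsilon\pmod{2\pi}$ for all $r$,'' in which the common phase $\delta_\epsilon$ may depend on $\epsilon$. Condition~(\ref{Cond:PGST2}) requires a single $\delta$ that works for all $\epsilon$, and your justification for the passage from the first form to the second --- that replacing $q_r(a,b)$ by $q_r(a,b)+\delta$ for a fixed $\delta$ does not change whether pretty good state transfer occurs --- only expresses translation invariance of the problem along the diagonal; it does not produce a uniform $\delta$. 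This quantifier interchange is precisely the content of the paper's citation of Proposition~4.01 of \cite{vanBommel}. The statement is true, and can be proved by compactness: take $\epsilon_n\to 0$, choose $\delta_{\epsilon_n}\in[0,2\pi)$, pass to a convergent subsequence with limit $\delta$, and observe that for any $\epsilon>0$ an index with $\epsilon_n<\epsilon/2$ and $\vert\delta_{\epsilon_n}-\delta\vert<\epsilon/2\pmod{2\pi}$ supplies a time $\tau$ witnessing the inequalities for $\delta$ itself. Without some such argument, the ``only if'' direction of your reduction to Kronecker's theorem does not go through.
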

\begin{proof}
The result follows from Proposition~4.01 of \cite{vanBommel} and Theorem~\ref{Thm:Kron}.
\end{proof}

Let $S$ be a set of vertices in $X$, we say {\em multiple pretty good state transfer} occurs on $S$ if there is pretty good state transfer between any two vertices in $S$.
Section~\ref{Section:MPGST} gives two families of Hermitian graphs that have multiple pretty good state transfer.

\section{Perfect state transfer in oriented graphs}
\label{Section:PSTOriented}

For graphs with real symmetric adjacency matrix, Kay shows that perfect state transfer cannot happen from one vertex to two distinct vertices \cite{k11}.  This monogamous behaviour does not hold in Hermitian graphs with non-real entries.  
A graph has {\em multiple perfect state transfer} on a set $S$ of at least three vertices if there is perfect state transfer between any two vertices in $S$.  When $S=V(X)$, we say $X$ has {\em universal perfect state transfer}.   Lemma~22 of \cite{UPST} gives a construction of Hermitian circulants that admit universal perfect state transfer.  The oriented 3-cycle is a special
case of this construction.  In the same paper, Cameron et al. conjecture that the oriented $K_2$ and the oriented $K_3$ are the only oriented graphs that can have universal perfect state transfer.  We confirm this conjecture in Section~\ref{Subsection:UPST}.

In \cite{gl}, Godsil and Lato investigated multiple perfect state transfer in oriented graph where $S$ is a proper subset of $V(X)$.  They give an example of an oriented graph on eight vertices that admits multiple perfect state transfer on a set of four vertices.
In Section~\ref{Subsection:MPST}, we extend their example to an infinite family of oriented graphs that have multiple perfect state transfer.

\subsection{Universal perfect state transfer}
\label{Subsection:UPST}

In \cite{USTG}, Cameron et al. show that the oriented $K_2$ and $K_3$ with any orientation admit universal perfect state transfer.  They give the following necessary conditions on the Hermitian graphs admitting universal perfect state transfer.
\begin{theorem}
\label{Thm:UPST2014}
Let $H$ be the matrix associated with a Hermitian graph $X$ that admits universal perfect state transfer.
Then the following holds:
\begin{enumerate}
\item
All eigenvalues of $H$ are simple.
\item
If $P$ is a unitary matrix diagonalizing $H$ then $\vert P_{a,b}\vert = \frac{1}{\sqrt{n}}$, for $a,b \in V(X)$.
\item
Every vertex in $X$ is periodic.
\end{enumerate}
\end{theorem}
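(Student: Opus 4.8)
The plan is to wring the three conclusions out of a single consequence of universality: that \emph{every} ordered pair of vertices of $X$ is strongly cospectral. Indeed, perfect state transfer from $a$ to $b$ forces $a$ and $b$ to be strongly cospectral by Equation~\eqref{Eqn:Quarrel}, and strongly cospectral vertices share an eigenvalue support; so under universal perfect state transfer all vertices are mutually strongly cospectral with one common support $\Phi$. Moreover, since $E_r\neq 0$ while $E_r = \sum_{c\in V(X)}(E_re_c)e_c^{T}$, some vertex $c$ has $E_re_c\neq 0$, i.e.\ $\theta_r\in\Phi_c=\Phi$; hence $\Phi=\{\theta_1,\dots,\theta_d\}$ and every eigenvalue lies in the support of every vertex.

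For the first conclusion I would fix $r$ and look at the orthogonal projection $E_r$ onto the $\theta_r$-eigenspace: its columns are exactly the vectors $E_re_c$, $c\in V(X)$, and together they span that eigenspace. Strong cospectrality says each $E_re_c$ is a unit-modulus scalar multiple of $E_re_a$ for a fixed $a$, and by the previous paragraph none of them is zero, so the $\theta_r$-eigenspace is spanned by the single vector $E_re_a$ and is one-dimensional. Thus all eigenvalues are simple, $d=n$, and we may write $E_r=v_rv_r^{*}$ for a unit $\theta_r$-eigenvector $v_r$; the columns of any unitary $P$ diagonalizing $H$ are these $v_r$ up to a phase. For the second conclusion, strong cospectrality also gives $\norm{E_re_c}$ independent of $c$, i.e.\ $(E_r)_{c,c}=\lvert(v_r)_c\rvert^2$ is independent of $c$; summing over the $n$ vertices yields $n\lvert(v_r)_c\rvert^2=\norm{v_r}^2=1$, so every entry of $P$ has modulus $1/\sqrt n$ (reading the second index as ranging over $V(X)$, legitimate since $d=n$).

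The third conclusion I would obtain from the semigroup law $U(s)U(t)=U(s+t)$ rather than from any number theory: if perfect state transfer runs from $a$ to $b$ at time $\tau_1$ and from $b$ to $a$ at time $\tau_2$ — both guaranteed by universality — then $U(\tau_1+\tau_2)e_a=U(\tau_2)(\alpha_1 e_b)=\alpha_1\alpha_2 e_a$, so the walk is periodic at $a$; and $a$ was arbitrary. (Alternatively one can invoke Theorem~\ref{Thm:Periodicity}: periodicity at $a$ is precisely the ratio condition on $\Phi_a$, which the composed transfer supplies.) The only point needing care is the reading of ``between every pair,'' which I take to mean perfect state transfer in both directions for each pair; that is what makes the composition go through cleanly. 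With that convention the genuinely substantive step is the forcing of a simple spectrum from universal strong cospectrality — the magnitude computation and the periodicity are then short.
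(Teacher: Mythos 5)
Your argument is correct, but note that the paper does not actually prove Theorem~\ref{Thm:UPST2014}: it is imported verbatim from Cameron \emph{et al.}\ \cite{USTG} as a list of known necessary conditions, so there is no in-paper proof to compare against. Your derivation is the standard one and it holds up. Universality forces all vertices to be pairwise strongly cospectral via Equation~(\ref{Eqn:Quarrel}) with a single common support containing every eigenvalue; since each column $E_re_c$ of the projector is a nonzero phase multiple of $E_re_a$, the $\theta_r$-eigenspace is one-dimensional; and then $(E_r)_{c,c}=\Vert E_re_c\Vert^2$ is constant in $c$ and sums to $\Tr(E_r)=1$, giving $\vert P_{a,b}\vert=1/\sqrt{n}$. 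The one step that genuinely needs the care you give it is periodicity: the composition $U(\tau_1+\tau_2)e_a=\alpha_1\alpha_2 e_a$ requires perfect state transfer in \emph{both} directions for each pair, and this is not automatic for Hermitian (non-real) matrices --- Section~\ref{Subsection:1wayPST} of this very paper exhibits one-way perfect state transfer without periodicity, and your parenthetical fallback via Theorem~\ref{Thm:Periodicity} does not rescue a one-directional reading, since a single transfer only yields the phase relation of Theorem~\ref{Thm:HermitianPST}, not the rationality of eigenvalue-difference ratios. Your both-directions reading of ``between every pair'' is the one used in \cite{USTG}, so the argument is complete under the intended definition; it would be worth stating that dependence explicitly rather than as an aside.
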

Suppose $X$ is an oriented graph on $n$ vertices that has universal perfect state transfer.
Let $H$ be its associated Hermitian matrix with spectral decomposition
\begin{equation*}
H = \sum_{r=1}^n \theta_r E_r.
\end{equation*}
Then $E_r$ has rank one with constant diagonal entries $n^{-1}$.  We see that $H^2$ has constant diagonal entries and
the underlying (undirected) graph of $X$ is regular.
Further, it follows from Theorem~6.1 of \cite{gl} that there exists a positive square-free integer $\Delta$ such that $\theta_r \in \ZZ \sqrt{\Delta}$, for $r=1,\ldots,n$. Hence 
\begin{equation}
\label{Eqn:mingap}
 \min_{r\neq s} \vert \theta_r - \theta_s\vert \geq \sqrt{\Delta}.
\end{equation}

We show in the following lemmas that an oriented graph with universal perfect state transfer can have at most eleven vertices.
\begin{lemma}
\label{Lem:difsqsum}
Let $H$ be a Hermitian matrix of order $n$ with zero diagonal entries. Let $\theta_1\leq \theta_2\leq \cdots \leq \theta_n$ be the eigenvalues of $H$.  Then
\begin{equation*}
\sum_{r,s=1}^n \left(\theta_r -\theta_s\right)^2 = 2n \Tr(H^2).
\end{equation*}
\end{lemma}
\begin{proof}
Observe that $\theta_r-\theta_s$ is an eigenvalue of $\left(H \otimes I_n - I_n\otimes H\right)$, for $r,s=1\ldots,n$.
Hence
\begin{equation*}
\sum_{r,s=1}^n \left(\theta_r -\theta_s\right)^2 = \Tr\left(H \otimes I_n - I_n\otimes H\right)^2
= \Tr \left(H^2 \otimes I_n + I_n\otimes H^2 -2H\otimes H \right).
\end{equation*}
The result follows from $\Tr (H\otimes H)=0$.
\end{proof}

\begin{lemma}
\label{Lem:boundsigma}
Let $X$ be an oriented graph on $n$ vertices and $m$ edges
with eigenvalues $\theta_1< \cdots <\theta_n$. 
Let $\sigma = \min_{r\neq s} \vert \theta_r-\theta_s\vert$.  Then
\begin{equation*}
\sigma^2 \frac{n(n^2-1)}{24} \leq m 
\quad \text{and}\quad \sigma^2 \leq \frac{12}{n+1}.
\end{equation*}
\end{lemma}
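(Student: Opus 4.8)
The plan is to feed the trace identity of Lemma~\ref{Lem:difsqsum} through two elementary estimates: a lower bound on $\sum_{r,s}(\theta_r-\theta_s)^2$ coming from the spacing $\sigma$, and an upper bound on $m$ coming from simplicity of the underlying graph.

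First I would evaluate the right-hand side of Lemma~\ref{Lem:difsqsum} for the Hermitian adjacency matrix $H$ of an oriented graph. Since the diagonal of $H$ is zero and each edge $\{a,b\}$ contributes $H_{a,b}=\pm\ii$ together with $H_{b,a}=\mp\ii$, we get $\Tr(H^2)=\sum_{a,b}|H_{a,b}|^2=2m$, so Lemma~\ref{Lem:difsqsum} becomes
\begin{equation*}
\sum_{r,s=1}^n (\theta_r-\theta_s)^2 = 4nm.
\end{equation*}

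Next I would bound this sum below. Because $\theta_1<\cdots<\theta_n$ with every consecutive gap at least $\sigma$, a telescoping argument gives $\theta_r-\theta_s=\sum_{k=s}^{r-1}(\theta_{k+1}-\theta_k)\ge \sigma(r-s)$ for $r>s$, and symmetrically for $r<s$, hence $(\theta_r-\theta_s)^2\ge \sigma^2(r-s)^2$ for all $r,s$. Summing over all ordered pairs and using the closed form $\sum_{r,s=1}^n(r-s)^2=\tfrac{n^2(n^2-1)}{6}$ (which follows from $\sum_r r = \tfrac{n(n+1)}{2}$ and $\sum_r r^2=\tfrac{n(n+1)(2n+1)}{6}$), I obtain $4nm\ge \sigma^2\tfrac{n^2(n^2-1)}{6}$, which rearranges to the first claimed inequality $\sigma^2\tfrac{n(n^2-1)}{24}\le m$.

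Finally, for the second inequality I would use that the underlying graph of $X$ is simple, so $m\le\binom{n}{2}=\tfrac{n(n-1)}{2}$; substituting this into the first inequality and cancelling the common factor $\tfrac{n(n-1)}{2}$ (noting $n^2-1=(n-1)(n+1)$) yields $\sigma^2\le\tfrac{12}{n+1}$. I do not expect a genuine obstacle: the only points demanding care are the bookkeeping that turns $\Tr(H^2)$ into $2m$ rather than $m$, and the telescoping step that upgrades the minimum gap $\sigma$ to the bound $|\theta_r-\theta_s|\ge\sigma|r-s|$ — the latter being exactly where the hypothesis of distinct eigenvalues is used.
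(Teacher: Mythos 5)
Your proof is correct and follows essentially the same route as the paper: the telescoping bound $|\theta_r-\theta_s|\ge\sigma|r-s|$, the identity $\sum_{r,s}(r-s)^2=\tfrac{n^2(n^2-1)}{6}$, Lemma~\ref{Lem:difsqsum} with $\Tr(H^2)=2m$, and finally $m\le\binom{n}{2}$. You even spell out the computation $\Tr(H^2)=2m$, which the paper only asserts implicitly.
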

\begin{proof}
It follows from the definition of $\sigma$ that $\sigma \vert r-s\vert \leq \vert \theta_r-\theta_s\vert$, and
\begin{equation*}
\sigma^2 \sum_{r,s=1}^n (r-s)^2 \leq \sum_{r,s=1}^n \left(\theta_r -\theta_s\right)^2.
\end{equation*}
The lower bound is
\begin{equation*}
\sigma^2 \sum_{r,s=1}^n (r-s)^2 = \sigma^2 \left(2n \sum_{r=1}^n r^2-2 \left(\sum_{r=1}^n r\right)^2 \right) = \sigma^2\frac{n^2(n^2-1)}{6}.
\end{equation*}
Applying Lemma~\ref{Lem:difsqsum} gives
\begin{equation*}
\sigma^2 \frac{n^2(n^2-1)}{6} \leq 2n \Tr(H^2) = 4mn.
\end{equation*}
The second inequality in the lemma follows immediately from $m\leq \binom{n}{2}$.
\end{proof}

\begin{corollary}
\label{Cor:11V}
Let $X$ be an oriented graph on $n$ vertices.  If $X$ admits universal perfect state transfer then $n\leq 11$.
Further, if $n\geq 6$ then $X$ has integral eigenvalues.
\end{corollary}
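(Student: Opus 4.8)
The plan is to combine the two facts already assembled in the discussion preceding the statement. First, by Theorem~6.1 of \cite{gl} the eigenvalues of an oriented graph with universal perfect state transfer lie in $\ZZ\sqrt{\Delta}$ for some positive square-free integer $\Delta$, so that (writing $\theta_r = a_r\sqrt{\Delta}$ with the $a_r$ distinct integers) the minimum gap $\sigma = \min_{r\neq s}\vert \theta_r-\theta_s\vert = \min_{r\neq s}\vert a_r - a_s\vert\sqrt{\Delta}$ satisfies $\sigma \geq \sqrt{\Delta}$, hence $\sigma^2 \geq \Delta \geq 1$. Second, Lemma~\ref{Lem:boundsigma} gives the analytic upper bound $\sigma^2 \leq \frac{12}{n+1}$, which came from the trace identity of Lemma~\ref{Lem:difsqsum} (applicable since the Hermitian matrix of an oriented graph has zero diagonal and $\Tr(H^2)=2m$) together with $m\leq\binom{n}{2}$.

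First I would record the chain $1 \leq \Delta \leq \sigma^2 \leq \frac{12}{n+1}$. The outer inequality $1 \leq \frac{12}{n+1}$ rearranges to $n \leq 11$, which is the first assertion.

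For the second assertion, assume $n \geq 6$. Then $\frac{12}{n+1} \leq \frac{12}{7} < 2$, so the same chain yields $\Delta \leq \sigma^2 < 2$; since $\Delta$ is a positive integer this forces $\Delta = 1$, and therefore $\theta_r \in \ZZ$ for every $r$, i.e.\ $X$ has integral eigenvalues.

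I do not expect a genuine obstacle here: all the work is contained in Lemmas~\ref{Lem:difsqsum} and~\ref{Lem:boundsigma} and in the invocation of Theorem~6.1 of \cite{gl}. The only points requiring care are that the hypothesis of Lemma~\ref{Lem:difsqsum} (zero diagonal) is met by the Hermitian matrix of an oriented graph, and that $\Tr(H^2)=2m$ so that the bound in Lemma~\ref{Lem:boundsigma} applies verbatim; both are immediate from the definition of $H$ for oriented graphs.
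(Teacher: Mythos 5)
Your proposal is correct and follows exactly the paper's own argument: chaining $1\leq\Delta\leq\sigma^2\leq\frac{12}{n+1}$ from Equation~(\ref{Eqn:mingap}) and Lemma~\ref{Lem:boundsigma} to get $n\leq 11$, and then using $\sigma^2<2$ for $n\geq 6$ to force $\Delta=1$. No differences worth noting.
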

\begin{proof}
It follows from Equation~(\ref{Eqn:mingap}) that $\sigma^2 \geq \Delta \geq 1$.  The second inequality of 
Lemma~\ref{Lem:boundsigma} gives $n\leq 11$.
When $n\geq 6$, we have $\sigma^2 < 2$ which implies $\Delta=1$ and the eigenvalues of $X$ are integers.
\end{proof}

We are ready to rule out universal perfect state transfer in oriented graphs on more than three vertices.
\begin{theorem}
\label{Thm:orientedUPST}
The oriented $K_2$ and $K_3$ are the only oriented graphs admitting universal perfect state transfer.
\end{theorem}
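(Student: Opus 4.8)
The plan is to combine the bound $n \le 11$ from Corollary~\ref{Cor:11V} with a finite case analysis that rules out every $n$ with $4 \le n \le 11$, leaving only $n = 2$ and $n = 3$ (which are known to work). By Corollary~\ref{Cor:11V}, for $6 \le n \le 11$ any oriented graph $X$ with universal perfect state transfer must have integer eigenvalues, all simple, summing to zero (since $\Tr(H) = 0$), and the underlying graph must be regular; moreover the first inequality of Lemma~\ref{Lem:boundsigma} forces the minimum eigenvalue gap $\sigma = 1$, so the eigenvalues are a set of $n$ consecutive integers symmetric about $0$ — but that is impossible when $n$ is even (the sum would be $n/2 \ne 0$) and when $n$ is odd it pins the spectrum to $\{-(n-1)/2, \dots, (n-1)/2\}$. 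For such a spectrum $\Tr(H^2) = \sum_r \theta_r^2 = \frac{n(n^2-1)}{12} = 2m$, which already fails to be an even integer (hence cannot equal $2m$ for integer $m$) for most residues of $n$; this should knock out all of $n = 6, 8, 10$ immediately and leave at most $n = 7, 9, 11$ to examine. For those surviving odd values, I would push further: the eigenvalues being exactly $\{-\midi, \dots, \midi\}$ forces $H$ to be a specific circulant-like object (its minimal polynomial and characteristic polynomial coincide and are prescribed), and one can argue via the fact that $H$ has entries in $\{0, \pm\ii\}$ with $H^2$ having constant diagonal equal to the degree, together with $\Tr(H^3) = 0$ and $\Tr(H^4)$ being determined, to derive a contradiction with the integrality/combinatorial constraints on the number of directed triangles and the degree.

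For the small cases $n = 4, 5$, Corollary~\ref{Cor:11V} only gives $\sigma^2 \ge \Delta \ge 1$ without forcing $\Delta = 1$, so I would handle these by hand. The eigenvalues lie in $\ZZ\sqrt{\Delta}$, are simple and sum to zero, and $\sigma^2 \le 12/(n+1)$ gives $\sigma^2 \le 12/5 < 3$ for $n=4$ and $\sigma^2 \le 2$ for $n = 5$, so $\Delta \in \{1, 2\}$; combined with Lemma~\ref{Lem:boundsigma}'s lower bound $\sigma^2 \, n(n^2-1)/24 \le m \le \binom{n}{2}$ this leaves only a handful of $(\Delta, \text{spectrum})$ possibilities, each of which can be tested against the requirement that $H$ be realizable as a zero-diagonal Hermitian matrix with $\pm\ii$ off-diagonal entries whose every spectral idempotent has constant diagonal $n^{-1}$ (equivalently, $H, H^2, H^3, \ldots$ all have constant diagonal). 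In particular for $n = 4$ the condition that $H^2$ have constant diagonal means $X$ is $k$-regular for some $k \le 3$, and $\sum \theta_r^2 = 2m = 4k$ must match; checking $k = 1, 2, 3$ against the allowed spectra eliminates all of them.

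The main obstacle I expect is not the even $n$ or the tiny cases — those die quickly to parity and trace arithmetic — but rather the residual odd cases $n = 7, 9$ (and conceivably $11$), where the spectrum is forced but one still has to rule out the existence of an actual Hermitian $\{0,\pm\ii\}$-matrix with that spectrum and constant-diagonal idempotents. The cleanest route is probably to exploit that for an oriented graph $H = \ii S$ where $S$ is a real skew-symmetric $\{0, \pm 1\}$-matrix, so $H^2 = -S^2$ is a real symmetric matrix whose diagonal is the (constant) degree $k$; then $\Tr(H^{2j})$ is determined by the prescribed spectrum while also equalling $\Tr((-S^2)^j)$, a sum over closed walks in the underlying graph, and the regularity plus small order should over-constrain these walk counts. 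I would also keep in mind the option of appealing directly to Theorem~\ref{Thm:UPST2014}: a diagonalizing unitary $P$ with all entries of modulus $n^{-1/2}$ is a flat unitary, and for $n = 7, 9, 11$ one can invoke known classifications or counting arguments for flat unitaries that simultaneously diagonalize a $\{0, \pm\ii\}$-matrix to finish. If a fully elementary argument for these few values proves stubborn, a short computer-assisted enumeration over the (very limited) candidate spectra and regular underlying graphs is a legitimate fallback, since the search space is tiny once $\sigma = 1$ pins the eigenvalues.
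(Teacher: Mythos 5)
There is a genuine gap, and it sits at the heart of your case analysis for $6\le n\le 11$. From $\sigma=\min_{r\ne s}|\theta_r-\theta_s|=1$ you conclude that the eigenvalues form a set of $n$ \emph{consecutive} integers symmetric about $0$. That does not follow: $\sigma$ is only the minimum gap, so the spectrum need only be a set of $n$ distinct integers, symmetric about $0$ (because $\ii H$ is real skew-symmetric), with at least one pair at distance $1$ and with $\sum_r\theta_r^2=\Tr(H^2)=nk$. Non-consecutive spectra survive these constraints — the paper's own enumeration produces $n=7$, $k=6$ with spectrum $0,\pm1,\pm2,\pm4$, which your framework would never even consider and which therefore goes unrefuted. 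The error also invalidates your one-line dismissal of even $n$ (e.g.\ $\{\pm1,\pm2,\pm3\}$ is a legitimate symmetric candidate for $n=6$ that must be excluded by the Diophantine condition $6k=2(1+4+9)$ having no integer solution in the admissible range of $k$, not by a parity argument about consecutive integers) and your claim that $\Tr(H^2)=n(n^2-1)/12$ is forced. The correct filter, as in the paper, is to combine $\frac{n^2-1}{12}\le k\le n-1$ (with $k$ even when $n$ is odd) with $nk=2\sum_{r\le\lfloor(n+1)/2\rfloor}\theta_r^2$ over \emph{all} symmetric integer spectra with minimum gap $1$; this leaves exactly three survivors, $(n,k)=(11,10)$, $(7,6)$, $(7,4)$.

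The second shortfall is that your tools for killing the surviving cases — $\Tr(H^3)=0$, $\Tr(H^4)$, walk counts, flat unitaries — are left as hopes rather than arguments, and it is not clear any of them closes the three cases. The paper's decisive observation is that $\ii H$ is a $\{0,\pm1\}$ skew-symmetric matrix, so its characteristic polynomial reduces modulo $2$ to the characteristic polynomial of the underlying graph ($K_{11}$, $K_7$, or $\overline{C_7}$); one then checks that the polynomial with the prescribed roots does not match modulo $2$. Without this (or an equally concrete substitute), the proof does not terminate. Your handling of $n=4,5$ by restricting to the regular graphs $C_n$ and $K_n$ and exhaustively testing the few admissible spectra with $\Delta\in\{1,2\}$ is essentially the paper's route and is fine.
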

\begin{proof}
Suppose $X$ is an oriented graph on $n$ vertices that admits universal perfect state transfer.
Then the underlying graph of $X$ is $k$-regular, for some integer $k$.  

Let $\theta_1<\cdots<\theta_n$
be the eigenvalues of the Hermitian matrix $H$ associated with $X$.  Then $\theta_r \in \ZZ \sqrt{\Delta}$, for some 
positive square-free integer $\Delta$.
Since $\ii H$ is a skew-symmetric matrix with entries $\pm 1$, we have
\begin{equation}
\label{Eqn:orientedUPST1}
\theta_r=-\theta_{n+1-r} \quad \text{for $r=1,\ldots,n$.}
\end{equation}
Further, the characteristic polynomial of $\ii H$ is equal to the characteristic polynomial of
its underlying graph over $\ZZ_2$.  

When $n=4$ or $5$, $C_n$ and $K_n$ are the only regular graphs on $n$ vertices.  An exhaustive search rules out oriented graphs on $4$ or $5$ vertices with spectrum satisying the above conditions.

For $n\geq 6$, it follows from Lemma~\ref{Lem:boundsigma} and Corollary~\ref{Cor:11V} that $\sigma=\min_{r\neq s} \vert \theta_r-\theta_s\vert=1$ and
\begin{equation*}
\frac{n^2-1}{12}\leq k \leq n-1.
\end{equation*}
Using this inequality together with the fact that $k$ is even when $n$ is odd, we narrow down to the following possibilities.
\begin{center}
\begin{tabular}{c|c|c|c|c|c|c}
$n$ & 6 & 7 & 8 & 9 & 10 & 11\\
\hline
$k$ & 3, 4, 5 & 4, 6 & 6, 7 & 8 & 9 & 10
\end{tabular}
\end{center}
Applying Equation~(\ref{Eqn:orientedUPST1}) to $\Tr(H^2)$ yields
\begin{equation*}
nk= 2\sum_{r=1}^{\lfloor \frac{n+1}{2}\rfloor} \theta_r^2.
\end{equation*}
Direct computation returns integral solutions to this equation for only three cases:
\begin{center}
\begin{tabular}{c|c|c|l}
$n$ & $k$ & underlying graph & Possible spectrum of $\ii H$ \\
\hline
11 & 10 & $K_{11}$ & $0, \pm \ii, \pm 2\ii, \pm 3\ii, \pm 4\ii, \pm 5\ii$\\
7 & 6 &$K_7$ & $0, \pm \ii, \pm 2\ii,  \pm 4\ii$\\
7 & 4 & $\overline{C_7}$& $0, \pm \ii, \pm 2\ii,  \pm 3\ii$\\
\end{tabular}
\end{center}
It is straightforward to check that for each case, the characteristic polynomial of the underlying graph is not equal to
the polynomial with the roots listed in the table over $\ZZ_2$.  

We conclude that there is no oriented graph on $n\geq 4$ vertices admitting universal perfect state transfer.
\end{proof}
\subsection{Multiple perfect state transfer}
\label{Subsection:MPST}

In \cite{gl}, Godsil and Lato relax the notion of universal perfect state transfer to multiple perfect state transfer on a subset of vertices in oriented graphs.  
Let
\begin{equation*}
H_{\overrightarrow{C}_4}= \begin{bmatrix}0&-\ii&0&\ii\\\ii&0&-\ii&0\\0&\ii&0&-\ii\\-\ii&0&\ii&0\end{bmatrix}
\end{equation*}
be the Hermitian matrix of the directed 4-cycle.  They show that the oriented graph with Hermitian matrix
\begin{equation*}
\begin{bmatrix} 1&0\\ 0&1 \end{bmatrix} \otimes H_{\overrightarrow{C}_4} + \begin{bmatrix} 0&\ii\\ -\ii&0\end{bmatrix} \otimes J_4
\end{equation*}
has multiple perfect state transfer on a set of four vertices.

Making use of this technical lemma from \cite{GSCQW}, we extend the above example to an infinite family of oriented graphs where multiple perfect state transfer occur.
\begin{lemma}
\label{Lem:Tensor}
Let $A$ and $B$ be Hermitian matrices where $A$ has spectral decomposition $A=\sum_r \theta_r E_r$.
Then 
\begin{equation*}
e^{-it(A\otimes B)}=\sum_r E_r\otimes e^{-it\theta_r B}.
\end{equation*}
\end{lemma}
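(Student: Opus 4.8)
The plan is to verify the claimed identity by comparing spectral decompositions on both sides, since $e^{-it(A\otimes B)}$ is completely determined by the eigenvalues and eigenprojections of $A\otimes B$. First I would fix a spectral decomposition $A=\sum_r \theta_r E_r$ as given, and also write $B=\sum_s \mu_s F_s$ for the distinct eigenvalues $\mu_s$ of $B$ with orthogonal eigenprojections $F_s$. Using $I=\sum_r E_r=\sum_s F_s$, we have $A\otimes B=\sum_{r,s}\theta_r\mu_s\,(E_r\otimes F_s)$, and the matrices $E_r\otimes F_s$ form an orthogonal family of projections summing to $I\otimes I$. The eigenvalues of $A\otimes B$ are therefore among the products $\theta_r\mu_s$ (grouping together any coincidences), so functional calculus gives
\begin{equation*}
e^{-it(A\otimes B)}=\sum_{r,s} e^{-it\theta_r\mu_s}\,(E_r\otimes F_s).
\end{equation*}

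Next I would collect the terms by the index $r$: for each fixed $r$,
\begin{equation*}
\sum_{s} e^{-it\theta_r\mu_s}\,(E_r\otimes F_s)
= E_r\otimes\Big(\sum_{s} e^{-it\theta_r\mu_s}F_s\Big)
= E_r\otimes e^{-it\theta_r B},
\end{equation*}
where the last equality is again the functional calculus applied to $B$ (the function $x\mapsto e^{-it\theta_r x}$ evaluated on the spectral decomposition of $B$). Summing over $r$ yields exactly $\sum_r E_r\otimes e^{-it\theta_r B}$, which is the desired formula. One could alternatively bypass the spectral decomposition of $B$ entirely: since $(E_r\otimes I)$ are commuting projections summing to the identity and $A\otimes B$ commutes with each $E_r\otimes I$, the operator $A\otimes B$ block-diagonalizes as $\bigoplus_r (\theta_r E_r)\otimes B$ on the ranges of $E_r\otimes I$, and $e^{-it(A\otimes B)}$ is the direct sum of $e^{-it\theta_r(E_r\otimes B)}=E_r\otimes e^{-it\theta_r B}$ on those blocks; a third route is to expand $e^{-it(A\otimes B)}$ as a power series, use $(A\otimes B)^k=A^k\otimes B^k$ and $A^k=\sum_r\theta_r^k E_r$, then resum.

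There is essentially no obstacle here: the statement is a routine consequence of the functional calculus together with the mixed-product property $(A\otimes B)(C\otimes D)=AC\otimes BD$ of the Kronecker product. The only point requiring a word of care is that $\{\theta_r\mu_s\}$ may have repetitions, so one should either group equal products before invoking functional calculus or simply note that $e^{-it(\cdot)}$ is a well-defined function on the spectrum regardless of how the projections are labelled; the regrouping by $r$ in the second displayed step is insensitive to this. I would present the argument via the first route (spectral decompositions of both $A$ and $B$), as it is the shortest and makes the regrouping transparent.
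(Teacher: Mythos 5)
Your argument is correct. Note, however, that the paper itself gives no proof of this lemma: it is quoted as a ``technical lemma'' imported from the reference \cite{GSCQW}, so there is no in-paper argument to compare yours against. Your main route --- writing $B=\sum_s \mu_s F_s$, observing that $A\otimes B=\sum_{r,s}\theta_r\mu_s\,(E_r\otimes F_s)$ with the $E_r\otimes F_s$ forming an orthogonal resolution of the identity, applying the functional calculus, and regrouping by $r$ --- is sound, and you correctly flag the one point that needs care, namely that the products $\theta_r\mu_s$ need not be distinct; since applying $f$ to a decomposition over a (possibly non-injective) labelling of orthogonal projections agrees with applying it to the grouped spectral projections, this causes no trouble. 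Of your three suggested routes, the power-series one ($(A\otimes B)^k=A^k\otimes B^k$ and $A^k=\sum_r\theta_r^kE_r$, then resum) is arguably the cleanest to write down, since it avoids introducing the spectral decomposition of $B$ altogether and sidesteps the eigenvalue-coincidence issue entirely; any of the three would serve as a complete, self-contained proof of the lemma.
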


\begin{lemma}
\label{Lem:MPSTconstruction}
Suppose $X$ is an oriented graph on $n$ vertices with associated Hermitian matrix $H_X$, whose eigenvalues are odd integers.
Let $Y$ be the oriented graph with Hermitian matrix
\begin{equation*}
H_Y = I_n \otimes H_{\overrightarrow{C}_4} + H_X \otimes J_4.
\end{equation*}
Then $Y$ admits multiple perfect state transfer on the set $\{4h+1, 4h+2, 4h+3,4h+4\}$, for $h=0,1,\ldots,n-1$.
\end{lemma}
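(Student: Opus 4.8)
The plan is to diagonalize $H_Y$ using the tensor structure together with Lemma~\ref{Lem:Tensor}, and then reduce the state transfer question on $Y$ to simultaneous state transfer questions on the $4$-cycle blocks. First I would record the spectral data of $H_{\overrightarrow{C}_4}$: its eigenvalues are $\{2,0,0,-2\}$ (equivalently $\pm 2$ simple and $0$ with multiplicity two), with the eigenvector for $\pm 2$ proportional to $(1, \mp\ii, -1, \pm\ii)^T/2$, and $J_4$ annihilates both the $2$- and $(-2)$-eigenvectors while acting as $4$ times a rank-one projection on the $0$-eigenspace. The crucial observation is that every eigenvector $v$ of $H_{\overrightarrow{C}_4}$ for eigenvalue $\pm 2$ satisfies $J_4 v = 0$, so in the spectral decomposition $H_X = \sum_r \theta_r E_r$ the vectors $u_r \otimes v$ (with $E_X$-eigenvectors $u_r$) are eigenvectors of $H_Y = I_n\otimes H_{\overrightarrow{C}_4} + H_X\otimes J_4$ with eigenvalue exactly $\pm 2$, independent of $r$. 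Thus on the ``$\pm 2$ sector'' the perturbation $H_X\otimes J_4$ is invisible, and the dynamics there is governed purely by $e^{\mp 2\ii t}$ times the identity on that sector.

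The second step is to isolate the vertices $\{4h+1,\dots,4h+4\}$, i.e.\ the block indexed by a fixed standard basis vector $e_{h+1}\in\CC^n$. I would write $e_{4h+j} = e_{h+1}\otimes f_j$ where $f_1,\dots,f_4$ is the standard basis of $\CC^4$. Decompose $f_j = f_j^{+} + f_j^{-} + f_j^{0}$ into the $2$-, $(-2)$-, and $0$-eigenspace components of $H_{\overrightarrow{C}_4}$. On the $0$-eigenspace, since $J_4$ acts there as $4$ times the projection onto the all-ones vector $\mathbf{1}$, one checks that $\mathbf{1}$ is itself a $0$-eigenvector of $H_{\overrightarrow{C}_4}$, so $H_{\overrightarrow{C}_4}$ and $J_4$ are simultaneously diagonalizable on that $2$-dimensional space, and the component of each $f_j$ along the remaining $0$-eigenvector of $H_{\overrightarrow{C}_4}$ that is orthogonal to $\mathbf{1}$ also lies in the kernel of $J_4$. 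So really only the $\mathbf{1}$-direction ``feels'' $H_X$. Applying Lemma~\ref{Lem:Tensor} (or directly $U_Y(t) = \sum_r E_r\otimes e^{-\ii t\theta_r J_4}\cdot(\text{correction from }H_{\overrightarrow{C}_4})$ — more carefully, since $I_n\otimes H_{\overrightarrow{C}_4}$ and $H_X\otimes J_4$ commute on each of these invariant subspaces, $U_Y(t)$ restricted to the block splits as $e^{-2\ii t}$ on the $+$ part, $e^{2\ii t}$ on the $-$ part, $1$ on the $\mathbf 1^\perp$ part of the $0$-space, and $\sum_r e^{-\ii t(4\theta_r)}E_r$ tensored appropriately on the $\mathbf 1$-direction).

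The third step is then to choose the time $\tau$. I want $U_Y(\tau)(e_{h+1}\otimes f_j) = \alpha\, e_{h+1}\otimes f_k$, which requires (a) the phase acquired on each of the four pieces $f_j^\pm$, $f_j^0$ to be the same phase $\alpha$ — so that the combination reassembles into a multiple of a single $f_k$ — and (b) that common phase to actually map $f_j$ to $f_k$ under $U_{\overrightarrow{C}_4}$-style dynamics on the $4$-cycle together with the $\mathbf 1$-direction scaling by $e^{-4\ii\theta_r\tau}$. Concretely: the $\pm$ parts contribute $e^{\mp 2\ii\tau}$, the $\mathbf{1}^\perp$-$0$-part contributes $1$, and the $\mathbf 1$-part of the $E_r$-block contributes $e^{-4\ii\theta_r\tau}$. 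Choosing $\tau = \pi/2$ makes $e^{\mp 2\ii\tau} = e^{\mp \ii\pi} = -1$, the $0$-part contributes $1$ on the $\mathbf 1^\perp$ direction but we need it to also be $-1$ — here is where the hypothesis that the $\theta_r$ are \emph{odd integers} enters: then $e^{-4\ii\theta_r\tau} = e^{-2\pi\ii\theta_r} = 1$, so actually the $\mathbf 1$-direction is unchanged. I would instead look for $\tau$ with $e^{-2\ii\tau}$ equal to the phase by which perfect state transfer occurs on $\overrightarrow{C}_4$ itself (which does have PST between antipodal vertices $1\leftrightarrow 3$ and $2\leftrightarrow 4$), \emph{simultaneously} with $e^{-4\ii\theta_r\tau}$ matching that same phase for all odd $\theta_r$; since $\overrightarrow{C}_4$ has PST at $\tau=\pi/2$ with the appropriate signs, and $4\theta_r\tau = 2\pi\theta_r \equiv 0$ for odd (indeed any) integer $\theta_r$, one verifies $\tau = \pi/2$ works, giving PST $1\leftrightarrow 3$ and $2\leftrightarrow 4$ within each block; and the remaining pairs follow by composing these (noting state transfer is its own inverse and the phases compose), or by a second time $\tau' $ chosen analogously — this is the point where I would double-check that \emph{oddness}, not mere integrality, is what is needed, and I suspect it is needed precisely to make the ``$0$-eigenspace of $H_{\overrightarrow{C}_4}$'' pieces phase-align, i.e.\ to get transfer between \emph{all} $\binom{4}{2}$ pairs rather than only the antipodal ones.

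\textbf{Main obstacle.} The real work is Step~3: carefully tracking the phases on the three invariant pieces (the $\pm2$-eigenspaces and the two-dimensional $0$-eigenspace, on which $H_{\overrightarrow{C}_4}$ and $J_4$ do \emph{not} coincide but do commute) and finding a single $\tau$ for which all of them, including the $H_X$-dependent factors $e^{-4\ii\theta_r\tau}$ across every eigenvalue $\theta_r$, conspire to send $f_j$ to $\pm f_k$ for each of the six pairs $\{j,k\}$. I expect the oddness of the eigenvalues of $H_X$ to be exactly the hypothesis that closes this gap, since it forces $e^{-4\ii\theta_r\tau}$ to a prescribed root of unity simultaneously for all $r$ at the times where the $4$-cycle itself has the desired transfer. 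Everything else — the block-diagonal structure, the invariance of the $\pm2$-sector under the perturbation, and the reduction via Lemma~\ref{Lem:Tensor} — is routine linear algebra.
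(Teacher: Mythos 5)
Your overall strategy is the same as the paper's: commute the two tensor summands, apply Lemma~\ref{Lem:Tensor} to write $e^{-\ii tH_Y}=\sum_r E_r\otimes e^{-\ii t(H_{\overrightarrow{C}_4}+\theta_r J_4)}$, and look for a single time at which the $4\times 4$ factor becomes a signed permutation matrix that is independent of $r$. The paper does this by directly evaluating $e^{-\ii\frac{\pi}{4}(H_{\overrightarrow{C}_4}+\theta_r J_4)}$ for odd $\theta_r$, obtaining $-1$ times the cyclic permutation $1\to 4\to 3\to 2\to 1$; hence $e^{-\ii\frac{\pi}{4}H_Y}=I_n\otimes(-C)$, and its powers at times $\pi/4,\ \pi/2,\ 3\pi/4$ give perfect state transfer between all six pairs in each block. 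Your Fourier-mode bookkeeping is the same computation in a different dress: $H_{\overrightarrow{C}_4}$ and $J_4$ are both circulants, simultaneously diagonalized by the order-$4$ Fourier basis $v_j=\tfrac12(1,\ii^j,\ii^{2j},\ii^{3j})^T$ with respective eigenvalue lists $(0,2,0,-2)$ and $(4,0,0,0)$, which is exactly your three-sector decomposition.

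The gap is in your Step 3, in two respects. First, the claim that the remaining pairs ``follow by composing'' the antipodal transfers is false: at $\tau=\pi/2$ you realize only the involution $1\leftrightarrow 3$, $2\leftrightarrow 4$, and the group generated by an involution is $\{\mathrm{id},\text{swap}\}$, so no composition of these transfers ever reaches the pair $\{1,2\}$. (A further warning sign: $\tau=\pi/2$ gives phase $e^{-2\pi\ii\theta_r}=1$ on the $\mathbf 1$-direction for \emph{any} integer $\theta_r$, so this time cannot be where oddness is used.) Second, the step you explicitly defer (``I would double-check\dots'') is the entire content of the lemma. To close it, take $\tau=\pi/4$: the phases on $v_0,v_1,v_2,v_3$ are $\bigl(e^{-\ii\pi\theta_r},\,e^{-\ii\pi/2},\,1,\,e^{\ii\pi/2}\bigr)=\bigl((-1)^{\theta_r},\,-\ii,\,1,\,\ii\bigr)$, while the signed $4$-cycle permutation $-C$ has Fourier eigenvalues $(-1,-\ii,1,\ii)$. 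These agree for every $r$ precisely when each $\theta_r$ is odd, which is where the hypothesis enters; with that verification in place, $e^{-\ii\frac{\pi}{4}H_Y}=I_n\otimes(-C)$ and all six pairs follow from powers of this single matrix. As written, your proposal establishes only the antipodal transfers and leaves the decisive computation as a conjecture, so it does not yet prove multiple perfect state transfer on the four-vertex blocks.
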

\begin{proof}
Let $H_X = \sum_{r} \theta_r E_r$ be the spectral decomposition of $H_X$.
Since $I_n \otimes H_{\overrightarrow{C}_4} $ and $H_X\otimes J_4$ commute, 
applying Lemma~\ref{Lem:Tensor} gives
\begin{equation*}
e^{-\ii t H_Y} = \left(I_n \otimes e^{-\ii t H_{\overrightarrow{C}_4}}\right) \left(\sum_r E_r \otimes e^{-\ii t \theta_r J_4} \right) = \sum_r E_r \otimes e^{-\ii t \left(H_{\overrightarrow{C}_4}+\theta_rJ_4\right)}.
\end{equation*}
For odd integer $\theta_r$, we have
\begin{equation*}
e^{-\ii \frac{\pi}{4} \left(H_{\overrightarrow{C}_4}+\theta_rJ_4\right)} =
\begin{bmatrix}0&-1&0&0\\0&0&-1&0\\0&0&0&-1\\-1&0&0&0\end{bmatrix}.
\end{equation*}
Hence 
\begin{equation*}
e^{-\ii \frac{\pi}{4} H_Y} = I_n \otimes \begin{bmatrix}0&-1&0&0\\0&0&-1&0\\0&0&0&-1\\-1&0&0&0\end{bmatrix},
\end{equation*}
and, for $h=0, 1, \ldots, n-1$, the vertex $4h+1$ has perfect state transfer to $4h+4$, $4h+3$ and $4h+2$ at time $\frac{\pi}{4}$, $\frac{\pi}{2}$
and $\frac{3\pi}{4}$, respectively.
\end{proof}

If $X$ is obtained by orienting all edges in the $(2m+1)$-cube from one bipartition to the other bipartition, then its associated matrix has the form
\begin{equation*}
H_X=\begin{bmatrix} 0 & \ii B\\ -\ii B^T &0\end{bmatrix}.
\end{equation*}
Then $H_X$ has the same spectrum as the adjacency matrix of the (undirected) $(2m+1)$-cube, which consists of only odd integers.  Lemma~\ref{Lem:MPSTconstruction} gives an oriented graph admitting multiple perfect state transfer for integer $m\geq 0$.  When $m=0$, then $Y$ is the oriented graph given in \cite{gl}.
\section{Perfect state transfer in Hermitian graphs}
\label{Section:PSTHermitian}

We focus on Hermitian graphs with algebraic entries in the first part of this section.
In particular, we study the phase factors when perfect state transfer occurs in these graphs in Section~\ref{Subsection:Phase}.

Suppose $X$ is a Hermitian graph with algebraic entries. By Theorem~6.1 of \cite{RST} and Theorem~\ref{Thm:Periodicity}, if perfect state transfer from $a$ to $b$ occurs then the quantum walk on $X$ is periodic at both $a$ and $b$.
Section~\ref{Subsection:1wayPST} gives examples of Hermitian graphs (with transcendental entries) in which perfect state transfer occurs from $a$ to $b$ but $a$ and $b$ are not periodic.
\subsection{Phase factor}
\label{Subsection:Phase}

We restrict our attention to Hermitian graphs with algebraic entries and extract information about the phase factor when perfect state transfer occurs. 

Let $H$ be an algebraic Hermitian matrix. Its characteristic polynomial has algebraic coefficients.
Given spectral decomposition $H=\sum_r \theta_r E_r$, the eigenvalues $\theta_r$'s are algebraic so are the entries in $E_r$.

\begin{theorem}
\label{Thm:Phase1}
Let $H$ be an algebraic matrix associated with a Hermitian graph with spectral decomposition $H=\sum_r \theta_r E_r$.  If perfect state transfer occurs from $a$ to $b$ with phase factor $\alpha$, then $\alpha$ is algebraic if and only if 
\begin{equation*}
\frac{\theta_r}{\theta_s} \in \QQ,
\end{equation*}
for $\theta_r, \theta_s \in \Phi_a$ such that $\theta_s\neq 0$.
\end{theorem}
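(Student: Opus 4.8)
The plan is to combine the perfect-state-transfer equations from Theorem~\ref{Thm:HermitianPST} with the fact that all the algebraic data ($\theta_r$, entries of $E_r$, hence the quarrels $q_r(a,b)$ viewed via $e^{\ii q_r}$) are algebraic numbers, and then invoke the Gelfond--Schneider theorem to pass between rationality of eigenvalue ratios and algebraicity of the phase. Concretely, from Equation~(\ref{Eqn:HermitianPST}) we have $\alpha = e^{\ii(q_r(a,b) - \tau\theta_r)}$ for every $\theta_r \in \Phi_a$. First I would dispose of the degenerate case: if $0 \in \Phi_a$ then taking $\theta_r = 0$ gives $\alpha = e^{\ii q_0(a,b)}$, and since $E_0 e_a = e^{\ii q_0(a,b)} E_0 e_b$ with all entries algebraic, $\alpha = e^{\ii q_0(a,b)}$ is an algebraic number (a ratio of corresponding nonzero algebraic entries). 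So in that case $\alpha$ is automatically algebraic, and separately the ratio condition forces every $\theta_r/\theta_s$ with $\theta_s \neq 0$ to be rational by periodicity (Theorem~\ref{Thm:Periodicity}), so both sides of the claimed equivalence hold and there is nothing to prove; I expect the interesting content is when $0 \notin \Phi_a$.

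Assume $0 \notin \Phi_a$. Fix a reference eigenvalue $\theta_1 \in \Phi_a$. For any other $\theta_s \in \Phi_a$, subtracting the two expressions for $\alpha$ gives $\tau(\theta_1 - \theta_s) = q_1(a,b) - q_s(a,b) + 2m_{1,s}\pi$ for an integer $m_{1,s}$; equivalently $e^{\ii\tau(\theta_1-\theta_s)} = e^{\ii(q_1-q_s)}$, and the right-hand side is algebraic (again a ratio of algebraic entries of $E_s$, or a unit-modulus algebraic number). Now for the direction ``$\theta_r/\theta_s \in \QQ$ implies $\alpha$ algebraic'': if all ratios $\theta_r/\theta_1$ are rational, write $\theta_r = c_r \lambda$ with $c_r \in \QQ$ and $\lambda$ a common value; then all the differences $\theta_1 - \theta_s$ are rational multiples of a single real number $\mu := \theta_1 - \theta_2$ (say), so $e^{\ii\tau\mu}$ is, up to a rational power, equal to an algebraic number, hence $e^{\ii\tau\mu}$ is algebraic; then $\alpha = e^{\ii q_1}\, e^{-\ii\tau\theta_1}$ and $e^{-\ii\tau\theta_1} = (e^{\ii\tau\mu})^{-\theta_1/\mu}$ with $\theta_1/\mu \in \QQ$, so $\alpha$ is a product of algebraic numbers, hence algebraic. (I would be a little careful that raising an algebraic number of modulus $1$ to a rational power stays algebraic --- it does, since it is a root of $z^{\text{denominator}} = (\text{algebraic})^{\text{numerator}}$.)

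For the converse, ``$\alpha$ algebraic implies all ratios rational'': suppose some ratio is irrational, say $\theta_1/\theta_2 \notin \QQ$ with $\theta_1,\theta_2 \in \Phi_a$ both nonzero. From $\alpha = e^{\ii q_1}e^{-\ii\tau\theta_1} = e^{\ii q_2}e^{-\ii\tau\theta_2}$ and $\alpha$ algebraic, together with $e^{\ii q_1}, e^{\ii q_2}$ algebraic, I get that both $e^{-\ii\tau\theta_1}$ and $e^{-\ii\tau\theta_2}$ are algebraic. Setting $z = e^{-\ii\tau}$ (if $\tau \neq 0$; the case $\tau = 0$ makes $\alpha$ a ratio of algebraic entries directly and forces nothing, but then $U(0) = I$ forces $a = b$, a trivial case), one has $z^{\theta_1}$ and $z^{\theta_2}$ both algebraic with $\theta_1,\theta_2$ algebraic and $\theta_1/\theta_2$ irrational --- this is exactly the configuration ruled out by the Gelfond--Schneider theorem (a transcendence statement: if $\beta$ is algebraic irrational and $z^{\beta}$ is algebraic then $z$ is either $0$, a root of unity, or transcendental in a way incompatible with $z^{\gamma}$ being algebraic for a second independent algebraic exponent). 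I would phrase this via the standard corollary of Gelfond--Schneider on the multiplicative independence of logarithms: $\log z^{\theta_1}/\log z^{\theta_2} = \theta_1/\theta_2$ being algebraic irrational is impossible when both $z^{\theta_i}$ are algebraic and $z$ is not a root of unity. The main obstacle will be packaging this last step correctly: making sure the normalization of $\tau$ and the branch of the logarithm are handled so that Gelfond--Schneider applies verbatim, and handling the edge cases ($\tau$ rational multiple of $\pi$, $z$ a root of unity, $\theta_i = 0$) cleanly rather than by hand-waving. Everything else is bookkeeping with algebraicity of spectral-projector entries and the identities already derived in Theorem~\ref{Thm:HermitianPST}.
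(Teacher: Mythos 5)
Your proposal is correct and follows essentially the same route as the paper: both directions hinge on the algebraicity of the spectral projections (hence of $e^{\ii q_r(a,b)}$ and, when $\alpha$ is algebraic, of each $e^{-\ii\tau\theta_r}$), with Gelfond--Schneider applied to $\left(e^{-\ii\tau\theta_s}\right)^{\theta_r/\theta_s}=e^{-\ii\tau\theta_r}$ for the forward implication and rational powers of algebraic numbers for the converse. The only difference is bookkeeping: the paper avoids your reference-eigenvalue normalization and the separate $0\in\Phi_a$ case (whose justification via Theorem~\ref{Thm:Periodicity} really needs the fact, recorded at the start of Section~\ref{Section:PSTHermitian}, that perfect state transfer in algebraic Hermitian graphs forces periodicity) by writing $\alpha^{\theta_r/\theta_s-1}=\bigl(e^{\ii q_s(a,b)}\bigr)^{\theta_r/\theta_s}e^{-\ii q_r(a,b)}$ directly.
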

\begin{proof}
Suppose perfect state transfer occurs from $a$ to $b$ at time $\tau$ with algebraic phase factor $\alpha$.  It follows from
Equation~(\ref{Eqn:PST}) that $e^{-\ii \tau \theta_r}$ is algebraic, for $\theta_r\in \Phi_a=\Phi_b$.
Applying the Gelfond-Schneider Theorem to
\begin{equation*}
\left(e^{-\ii \tau \theta_s}\right)^{\frac{\theta_r}{\theta_s}} = e^{-\ii \tau \theta_r},
\end{equation*}
for $\theta_r, \theta_s \in \Phi_a$ with $\theta_s\neq 0$, we conclude that $\frac{\theta_r}{\theta_s}$ is rational.

Now suppose $\frac{\theta_s}{\theta_r}\in \QQ$ for $\theta_r, \theta_s \in \Phi_a$ with $\theta_s\neq 0$.
Let $q_r(a,b)$ be the quarrels from $a$ to $b$ relative to $\theta_r\in \Phi_a$. It follows from
Equation~(\ref{Eqn:Quarrel}) that $e^{\ii q_r(a,b)}$ is algebraic.
Applying Equation~(\ref{Eqn:HermitianPST}) yields
\begin{equation*}
\alpha^{\left(\frac{\theta_r}{\theta_s}-1\right)} 
= \left(e^{\ii(q_s(a,b)-\tau \theta_s)}\right)^{\frac{\theta_r}{\theta_s}} e^{\ii(\tau\theta_r - q_r(a,b))}
=\left(e^{\ii q_s(a,b)}\right)^{\frac{\theta_r}{\theta_s}} e^{-\ii q_r(a,b)}.
\end{equation*}
The right-hand side is algebraic, so is  $\alpha$.
\end{proof}

\begin{theorem}
\label{Thm:Phase2}
Let $H$ be an algebraic matrix associated with a Hermitian graph with spectral decomposition $H=\sum_r \theta_r E_r$.
Suppose perfect state transfer occurs from $a$ to $b$ with phase factor $\alpha$.
If there exist integers $k_r$'s satisfying
\begin{equation*}
\sum_{r\in \Phi_a} k_r \theta_r = 0
\quad
\text{and}
\quad
\sum_{r\in \Phi_a} k_r \neq 0
\end{equation*}
then $\alpha$ is algebraic.
\end{theorem}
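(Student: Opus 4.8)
The plan is to sidestep the Gelfond--Schneider argument used for Theorem~\ref{Thm:Phase1} and instead exploit the given integer linear dependence among the eigenvalues directly. The key observation will be that multiplying together the perfect state transfer identities from Equation~(\ref{Eqn:HermitianPST}), after raising the $r$-th one to the power $k_r$, causes the relation $\sum_r k_r\theta_r=0$ to cancel the time-dependent factor and leaves an algebraic power of $\alpha$.

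First I would record that, if perfect state transfer from $a$ to $b$ occurs at time $\tau$, then by Equation~(\ref{Eqn:HermitianPST}) we have $\alpha = e^{\ii(q_r(a,b)-\tau\theta_r)}$ for every $\theta_r\in\Phi_a$. Raising the $r$-th identity to the power $k_r$ and taking the product over $r\in\Phi_a$ gives
\[
\alpha^{\sum_r k_r} \;=\; \Bigl(\prod_{r} e^{\ii k_r q_r(a,b)}\Bigr)\, e^{-\ii\tau \sum_r k_r\theta_r} \;=\; \prod_{r} \bigl(e^{\ii q_r(a,b)}\bigr)^{k_r},
\]
the last step using $\sum_r k_r\theta_r = 0$. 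Next I would check that the right-hand side is algebraic: since $H$ is an algebraic matrix its spectral idempotents $E_r$ have algebraic entries, and perfect state transfer forces $a$ and $b$ to be strongly cospectral (Theorem~\ref{Thm:HermitianPST}), so $\Phi_a=\Phi_b$ and, for $\theta_r\in\Phi_a$, both $E_r e_a$ and $E_r e_b$ are nonzero vectors with algebraic entries; Equation~(\ref{Eqn:Quarrel}) then writes $e^{\ii q_r(a,b)}$ as a ratio of two corresponding entries, the denominator chosen nonzero, so $e^{\ii q_r(a,b)}$ is algebraic. Hence $\alpha^{N}$ is algebraic, where $N := \sum_{r}k_r$.

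To conclude, I would use that $N\neq 0$ by hypothesis and $\alpha\neq 0$ (since $|\alpha|=1$): then $\alpha$ is a root of $x^{|N|}-c$, where $c$ is $\alpha^{N}$ or its reciprocal, both algebraic, so $\alpha$ is algebraic. The whole thing is essentially bookkeeping once the product identity is set up; the only step needing the hypotheses is the claim that $e^{\ii q_r(a,b)}$ is a ratio of algebraic numbers with \emph{nonzero} denominator, which is precisely where strong cospectrality (giving $E_r e_b\neq 0$ for $\theta_r\in\Phi_a$) enters, together with the harmless sign split on $N$ — so I do not anticipate a genuine obstacle, only these minor points of care.
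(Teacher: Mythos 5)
Your proposal is correct and is essentially identical to the paper's proof: the paper also raises the identities from Equation~(\ref{Eqn:HermitianPST}) to the powers $k_r$, multiplies them, uses $\sum_r k_r\theta_r=0$ to kill the time-dependent factor, and concludes from the algebraicity of the $e^{\ii q_r(a,b)}$ and $\sum_r k_r\neq 0$ that $\alpha$ is algebraic. Your additional justification that $e^{\ii q_r(a,b)}$ is algebraic (as a ratio of nonzero algebraic entries of $E_re_a$ and $E_re_b$) is a detail the paper establishes in the surrounding discussion rather than inside this proof, but it is the same reasoning.
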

\begin{proof}
From Equation~(\ref{Eqn:HermitianPST}), we have
\begin{equation*}
\alpha^{\sum_{r\in \Phi_a} k_r } = 
e^{-\ii \tau \left(\sum_{r\in \Phi_a} k_r \theta_r\right)}\prod_{r\in \Phi_a} \left(e^{\ii q_r(a,b)}\right)^{k_r}
= \prod_{r\in \Phi_a} \left(e^{\ii q_r(a,b)}\right)^{k_r}.
\end{equation*}
Since the right-hand side is algebraic and $\sum_{r\in \Phi_a} k_r \neq 0$, we conclude that $\alpha$ is algebraic.
\end{proof}

We apply the theorem to algebraic Hermitian graphs where $\Phi_a$ contains all eigenvalues of $H$.
\begin{corollary}
\label{Cor:Phase}
Let $H$ be an algebraic matrix associated with a Hermitian graph with zero diagonal entries.
Suppose perfect state transfer occurs from $a$ to $b$ with phase factor $\alpha$.
If $a$ has full eigenvalue support then $\alpha$ is algebraic.
\end{corollary}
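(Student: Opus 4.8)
The plan is to deduce this directly from Theorem~\ref{Thm:Phase2} by producing the required integer relation among the eigenvalues. The key observation is that $H$ has zero diagonal entries, so $\Tr(H) = \sum_{r} \theta_r = 0$ where the sum is over \emph{all} eigenvalues of $H$ counted with multiplicity. Since $a$ has full eigenvalue support, $\Phi_a$ is the set of \emph{all distinct} eigenvalues $\theta_1, \ldots, \theta_d$, and writing $\mu_r$ for the multiplicity of $\theta_r$ we obtain $\sum_{r \in \Phi_a} \mu_r \theta_r = 0$. This gives the first condition of Theorem~\ref{Thm:Phase2} with $k_r = \mu_r$.

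Next I would verify the second condition, namely $\sum_{r \in \Phi_a} k_r \neq 0$. With $k_r = \mu_r$ the multiplicities, we have $\sum_{r \in \Phi_a} \mu_r = n$, the order of $H$, which is a positive integer and hence nonzero. (Implicitly we are assuming $X$ is nonempty, i.e.\ $n \geq 1$, which is harmless.) Both hypotheses of Theorem~\ref{Thm:Phase2} are therefore met, so we conclude that the phase factor $\alpha$ is algebraic.

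I do not expect any genuine obstacle here: the only thing to be careful about is the bookkeeping distinction between distinct eigenvalues and eigenvalues with multiplicity, so that the trace-zero condition is correctly translated into a relation indexed by $\Phi_a$. One could alternatively invoke the rank identity $\sum_r \mu_r E_r$-style argument, but the trace argument is the cleanest. If one wanted to avoid multiplicities entirely, note that $H$ having zero diagonal also forces $\Tr(H) = 0$ to be read off the characteristic polynomial, and full support is exactly what lets us use \emph{all} eigenvalues; without full support the relation $\sum \theta_r = 0$ would involve eigenvalues outside $\Phi_a$ and the corollary could fail.
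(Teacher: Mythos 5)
Your proposal is correct and matches the paper's own proof essentially verbatim: both take $k_r$ to be the multiplicity of $\theta_r$, use the zero diagonal to get $\sum_{r\in\Phi_a} k_r\theta_r=\Tr(H)=0$, note that $\sum_{r\in\Phi_a}k_r$ equals the number of vertices and is therefore nonzero, and then invoke Theorem~\ref{Thm:Phase2}. No further comment is needed.
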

\begin{proof}
Let $k_r$ be the multiplicity of $\theta_r$, for $\theta_r \in \Phi_a$.
Since $\Phi_a$ contains all eigenvalues of $H$, we have $\sum_{r\in \Phi_a} k_r \theta_r= \Tr(H)=0$ and
$\sum_{r\in \Phi_a}k_r$ equals the number of vertices.  It follows from Theorem~\ref{Thm:Phase2} that the phase factor at perfect state transfer is algebraic.
\end{proof}

Given spectral decomposition of an algebraic Hermitian matrix $H=\sum_r \theta_r E_r$, if $E_r$ has constant diagonal then
every vertex has full eigenvalue support.  In particular, Corollary~\ref{Cor:Phase} applies to
\begin{itemize}
\item
the adjacency matrix of a walk regular graph,
\item
an algebraic Hermitian matrix with zero diagonal that belongs to a Bose-Mesner algebra, and
\item
Hermitian circulants with algebraic entries and zero diagonal.
\end{itemize}
\subsection{One-way perfect state transfer}
\label{Subsection:1wayPST}
We saw at the beginning of Section~\ref{Section:PSTHermitian} that if perfect state transfer occurs from $a$ to $b$ in an algebraic Hermitian graph then both $a$ and $b$ are periodic.  In particular, there is perfect state transfer from $b$ back to $a$.

We give a family of Hermitian graphs, with transcendental entries, that have perfect state transfer from $a$ to $b$ but not periodic at $a$ nor $b$.  In particular, they do not have perfect state transfer from $b$ to $a$.

\begin{theorem}
\label{Thm:1wayPST}
There exist infintely many Hermitian graphs which admit perfect state transfer from $a$ to $b$ but are not periodic at $a$.
\end{theorem}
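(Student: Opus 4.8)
The plan is to build small explicit examples and then take disjoint unions to get an infinite family. The key structural idea comes from Theorem~\ref{Thm:HermitianPST} and Theorem~\ref{Thm:Periodicity}: perfect state transfer from $a$ to $b$ requires strong cospectrality plus the ratio condition of Theorem~\ref{Thm:HermitianPST}(\ref{Cond:HermitionPST2}) involving the quarrels, whereas periodicity at $a$ is governed purely by the ratio condition on $\Phi_a$ (Equation~(\ref{Eqn:Ratio})), with all quarrels zero. So I want to engineer a graph where the eigenvalue support of $a$ has irrational eigenvalue-difference ratios (killing periodicity by Theorem~\ref{Thm:Periodicity}), but where the quarrels $q_r(a,b)$ are arranged so that the more flexible ratio condition with the $2m_{r,s}\pi$ shifts in Theorem~\ref{Thm:HermitianPST}(\ref{Cond:HermitionPST2}) is still satisfiable for a single transfer time $\tau$.

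First I would look for the smallest possible instance. Since periodicity fails, $\Phi_a$ must contain at least three eigenvalues $\theta_1,\theta_2,\theta_3$ (two eigenvalues always give a rational ratio trivially, and the ratio condition is vacuous), and for genuine one-wayness we want, say, $(\theta_1-\theta_2)/(\theta_2-\theta_3)$ transcendental. A weighted triangle (or weighted path on three vertices) with suitably chosen transcendental edge weights is the natural candidate: a $3\times 3$ Hermitian matrix has three parameters up to the relevant symmetries, enough to place the eigenvalues wherever we like and to force the transition matrix at one chosen time $\tau$ to map $e_a$ to a phase times $e_b$. Concretely I would parametrize $H$, impose that $U(\tau)e_a = \alpha e_b$ as a system in the entries of $H$ and in $\tau$, and solve; the strong cospectrality of $a,b$ together with the correct quarrel values will fall out of this, and I can then check directly that the ratio condition on $\Phi_a$ fails, e.g. by choosing the eigenvalues so that their pairwise differences are $\QQ$-linearly independent or have a transcendental ratio. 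Here the Lindemann--Weierstrass or Gelfond--Schneider theorem lets me certify that the needed weights (or times) are transcendental, consistent with the remark in the introduction that algebraicity is exactly the obstruction.

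Once one such graph $X_0$ on three vertices exists, I get infinitely many by taking $X_0 \sqcup K_1 \sqcup \cdots \sqcup K_1$, or more robustly $X_0 \sqcup G$ for any graph $G$ on a vertex set disjoint from $\{a,b\}$ whose spectrum does not interfere: the transition matrix of a disjoint union is block diagonal, so perfect state transfer from $a$ to $b$ persists and $a$ is still not periodic. That produces an infinite family of pairwise non-isomorphic Hermitian graphs with the stated property. (If one wants connectedness, attach a pendant structure far from $a,b$; one must then re-examine whether $\Phi_a$ picked up new eigenvalues, but since new eigenvalues appearing in $\Phi_a$ only makes the ratio condition harder to satisfy, non-periodicity is preserved, while PST from $a$ to $b$ must be re-verified — so the safe route is disjoint unions.)

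The main obstacle is the construction of the base example: I need a single Hermitian matrix $H$ and a single time $\tau$ simultaneously realizing (i) $U(\tau)e_a = \alpha e_b$ and (ii) transcendental ratio of eigenvalue differences in $\Phi_a$. The tension is that condition (i), via Theorem~\ref{Thm:HermitianPST}(\ref{Cond:HermitionPST2}), is itself a ratio condition — just a weaker, $2\pi$-shifted one — so I must be careful that the flexibility provided by the integer shifts $m_{r,s}$ genuinely lets me escape the rigid rational constraint of periodicity; this is precisely where a clever choice of quarrels (not all zero) is essential, and where a direct hand computation with a well-chosen $3\times 3$ matrix, rather than an abstract existence argument, is the reliable path. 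I expect the remainder — block-diagonality of disjoint unions and the non-isomorphism count — to be routine.
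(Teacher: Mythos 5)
Your core strategy is the same as the paper's: reverse-engineer $H$ from a prescribed spectral decomposition $H=PDP^{-1}$ so that the quarrels $q_r(a,b)$ coincide with $\tau\theta_r$ (up to a common shift and multiples of $2\pi$) at a single time $\tau$, while the eigenvalue-difference ratios on $\Phi_a$ are irrational so that Theorem~\ref{Thm:Periodicity} kills periodicity. The paper does this on four vertices with an explicit unitary $P$ carrying a free phase $e^{\ii\lambda}$ and $D=\diag(0,\pi,\lambda,\lambda+\pi)$, and obtains infinitely many examples by varying $\lambda\notin\QQ\pi$; your variant is three vertices plus disjoint unions. The disjoint-union step is fine (the transition matrix is block diagonal, and $\Phi_a$ is unchanged since the new eigenprojections act trivially on $e_a$, so both PST from $a$ to $b$ and non-periodicity at $a$ persist).

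The one step you leave unexecuted — the $3\times3$ base example — does go through, but it is less automatic than "parametrize and solve" suggests, because the quarrels are not free: unitarity of $P$ forces $e_a^*e_b=\sum_r e^{\ii q_r(a,b)}(E_r)_{b,b}=0$, so the three phases $e^{\ii q_r}$ must positively span the plane, and the PST condition then pins the eigenvalue differences to $\tau(\theta_r-\theta_s)=q_r-q_s+2m_{r,s}\pi$. In particular the most symmetric admissible choice $q_r\in\{0,2\pi/3,4\pi/3\}$ with equal weights yields rational difference ratios and hence periodicity, so your worry about the "tension" is well placed. It is resolved by taking three quarrels that positively span the plane and whose pairwise differences have an irrational ratio (e.g.\ $0$, $2$, $2+\sqrt{2}$), giving $\theta_r=q_r/\tau$ and an irrational ratio $(\theta_1-\theta_2)/(\theta_1-\theta_3)$; note also that no transcendence theorem is needed here, since non-periodicity only requires irrationality of that ratio. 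The paper's four-eigenvalue set $\{0,\pi,\lambda,\lambda+\pi\}$ satisfies the unitarity constraint automatically by cancellation in pairs, which is presumably why it works on four vertices with such a clean $P$; once you fix a concrete base example your argument is complete.
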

\begin{proof}
Let $\lambda$ be any real number such that $\lambda \not \in \QQ \pi$.
Define matrices
\begin{equation*}
P=\frac{1}{2}\begin{bmatrix}1&1&1&1\\1&1&-1&-1\\1&-1&e^{\ii\lambda}& -e^{\ii\lambda}\\1&-1&-e^{\ii\lambda}& e^{\ii\lambda}\end{bmatrix} \quad \text{and}\quad
D=\begin{bmatrix}0&0&0&0\\0&\pi&0&0\\0&0&\lambda&0\\0&0&0&\lambda+\pi \end{bmatrix}.
\end{equation*}
Consider the Hermitian matrix
\begin{equation*}
H := PDP^{-1} = \left(\frac{\pi+\lambda}{2}\right) I_4 - 
\begin{bmatrix} 0&\frac{\lambda}{2}&\frac{\pi}{4}(1+e^{-i\lambda}) & \frac{\pi}{4}(1-e^{-i\lambda})\\
\frac{\lambda}{2}&0&\frac{\pi}{4}(1-e^{-i\lambda})&\frac{\pi}{4}(1+e^{-i\lambda})\\
\frac{\pi}{4}(1+e^{i\lambda})&\frac{\pi}{4}(1-e^{i\lambda})&0&\frac{\lambda}{2}\\
\frac{\pi}{4}(1-e^{i\lambda})&\frac{\pi}{4}(1+e^{i\lambda})&\frac{\lambda}{2}&0
\end{bmatrix}.
\end{equation*}

Let $\theta_1=0,\theta_2=\pi,\theta_3=\lambda$ and $\theta_4=\lambda+\pi$.
All vertices have full eigenvalue support.  
Vertices 1 and 3 are strongly cospectral with quarrels: $q_1(3,1)=0$, $q_2(3,1)=\pi$, $q_3(3,1)=\lambda$, and $q_4(3,1)=\lambda+\pi$.
By Theorem~\ref{Thm:HermitianPST}, we have perfect state transfer from vertex $3$ to $1$ at time $\tau=1$ with phase factor $1$.  
As $\lambda$ is not a rational multiple of $\pi$,  we have 
\begin{equation*}
\frac{\theta_3-\theta_1}{\theta_2-\theta_1}=\frac{\lambda}{\pi}\notin\mathbb{Q}.
\end{equation*}
By Theorem~\ref{Thm:Periodicity}, $H$ is not periodic at vertex 1 nor at  vertex 3. 
\end{proof}

\begin{example}
Consider the complex Hadamard matrix
\begin{equation*}
P=\begin{bmatrix}
        1 & 1 & 1 & 1 & i & i & i & i\\
        1 & -1 & e^{i\theta} & -e^{i\theta} & -1 & 1 & -e^{i\theta} & e^{i\theta}\\
        1 & 1 & e^{i2\theta} & e^{i2\theta} & -i & -i & -ie^{i2\theta} & -ie^{i2\theta}\\
        1 & -1 & e^{i3\theta} & -e^{i3\theta} & 1 & -1 & e^{i3\theta} & -e^{i3\theta}\\
        i & i & -i & -i & -1 & -1 & 1 & 1\\
        -i & i & ie^{i\theta} & -ie^{i\theta} & i & -i & -ie^{i\theta} & ie^{i\theta}\\
        i & i & -ie^{i2\theta} & -ie^{i2\theta} & 1 & 1 & -e^{i2\theta} & -e^{i2\theta}\\
        -i & i & ie^{i3\theta} & -ie^{i3\theta} & -i & i & ie^{i3\theta} & -ie^{i3\theta}\\
\end{bmatrix}
\end{equation*}
and diagonal matrix $D=\operatorname{diag}\left(0, \pi, \theta, \theta+\pi, \frac{\pi}{2}, \frac{3\pi}{2},\theta+\frac{\pi}{2},\theta+\frac{3\pi}{2}\right)$. 
Then the Hermitian graph $X$ with  matrix $H=PDP^{-1}$ 
admit perfect state transfer from vertex 1 to 2 at $t=1$, 
from vertex 1 to 3 at $t=2$, from vertex 1 to 4 at $t=3$. 
Each vertex  has full eigenvalue support, and if   $\theta \notin \QQ \pi$, 
then the ratio condition is not satisfied and $X$ is not periodic at any vertex.
\end{example}

\section{Multiple pretty good state transfer}
\label{Section:MPGST}

Theorem~\ref{Thm:1wayPST} shows that it is possible to have one-way perfect state transfer in Hermitian graph.
We now show that pretty good state transfer in Hermitian graphs goes both ways.

\begin{lemma}
\label{Lem:2wayPGST}
If a Hermitian graph admits pretty good state transfer from $a$ to $b$, then it has pretty good state transfer from $b$ to $a$.
\end{lemma}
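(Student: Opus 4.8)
The plan is to use the symmetric characterization of pretty good state transfer provided by Theorem~\ref{Thm:PGST} together with the elementary fact that the projections $E_r$ are Hermitian, so that the quarrels from $b$ to $a$ are simply the negatives of the quarrels from $a$ to $b$. First I would recall that strong cospectrality is a symmetric relation: if $E_r e_a = e^{\ii q_r(a,b)} E_r e_b$ for all $\theta_r \in \Phi_a$, then applying $E_r$ (which is idempotent and self-adjoint, hence preserves the relation) gives $E_r e_b = e^{-\ii q_r(a,b)} E_r e_a$, so $a$ and $b$ are strongly cospectral with $\Phi_b = \Phi_a$ and quarrels $q_r(b,a) = -q_r(a,b) \pmod{2\pi}$. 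This establishes Condition~(\ref{Cond:PGST1}) of Theorem~\ref{Thm:PGST} for the pair $(b,a)$.

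Next I would verify Condition~(\ref{Cond:PGST2}). By hypothesis and Theorem~\ref{Thm:PGST} applied to $(a,b)$, there is a real number $\delta$ such that for every integer tuple $(l_1,\ldots,l_d)$ with $\sum_r l_r \theta_r = 0$ we have $\sum_r l_r(q_r(a,b) + \delta) \equiv 0 \pmod{2\pi}$. I claim that $-\delta$ works as the witness for the reversed transfer: using $q_r(b,a) = -q_r(a,b)$ we compute
\begin{equation*}
\sum_{r=1}^d l_r \bigl(q_r(b,a) + (-\delta)\bigr) = -\sum_{r=1}^d l_r \bigl(q_r(a,b) + \delta\bigr) \equiv 0 \pmod{2\pi},
\end{equation*}
which is exactly Condition~(\ref{Cond:PGST2}) for $(b,a)$. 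Both conditions of Theorem~\ref{Thm:PGST} then hold, giving pretty good state transfer from $b$ to $a$.

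I do not expect any genuine obstacle here; the statement is essentially a bookkeeping consequence of the Kronecker-type characterization, whose symmetry in the roles of $a$ and $b$ is broken only by the sign of the quarrels and the sign of $\delta$, and these cancel. The one point worth stating carefully is the passage from $E_r e_a = e^{\ii q_r(a,b)} E_r e_b$ to $E_r e_b = e^{-\ii q_r(a,b)} E_r e_a$: this uses that $e^{\ii q_r(a,b)}$ is a unit-modulus scalar and that $E_r e_a \neq 0$ exactly when $E_r e_b \neq 0$, so the two vectors are nonzero simultaneously and each is a unimodular multiple of the other. (Note that, by contrast, the analogous argument fails for \emph{perfect} state transfer precisely because Theorem~\ref{Thm:HermitianPST} additionally requires the integer ratio condition~(\ref{Cond:HermitionPST2}), which is not automatically preserved — indeed Theorem~\ref{Thm:1wayPST} shows it can fail — whereas the Kronecker condition above is manifestly sign-symmetric.)
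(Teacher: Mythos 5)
Your proof is correct, but it takes a genuinely different route from the paper's. You derive two-way transfer from the Kronecker-type characterization (Theorem~\ref{Thm:PGST}): strong cospectrality is symmetric with $q_r(b,a) \equiv -q_r(a,b) \pmod{2\pi}$ (one simply divides Equation~(\ref{Eqn:Quarrel}) by the unimodular scalar; no need to ``apply $E_r$'' again), and the witness $\delta$ for the pair $(a,b)$ becomes the witness $-\delta$ for $(b,a)$, since negating a quantity that vanishes modulo $2\pi$ keeps it zero modulo $2\pi$. The only cosmetic point is that the paper normalizes quarrels to lie in $[0,2\pi)$, so strictly $q_r(b,a)=2\pi-q_r(a,b)$ when $q_r(a,b)\neq 0$; the resulting extra term $2\pi\sum_r l_r$ is an integer multiple of $2\pi$ and is harmless. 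The paper instead argues dynamically: it takes a time $\tau_1$ with $U(\tau_1)e_a = \gamma_1 e_b + \rho_1$, invokes almost periodicity of $U(t)$ to find $\tau_2>\tau_1$ with $U(\tau_2)e_a = \gamma_2 e_a + \rho_2$, and checks by unitarity that $U(\tau_2-\tau_1)e_b$ is within $\Vert\rho_1\Vert+\Vert\rho_2\Vert$ of $\overline{\gamma_1}\gamma_2 e_a$. That argument uses neither strong cospectrality nor Theorem~\ref{Thm:PGST}, only approximate recurrence, so it is more self-contained and applies verbatim to any almost periodic unitary evolution; your argument buys an explicit description of the reversed quarrels and of the admissible phase, which can be useful when tracking phases in multiple state transfer. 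Your closing remark correctly identifies why the analogous symmetry fails for perfect state transfer.
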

\begin{proof}
Suppose $U(t)$ is the transition matrix of a Hermitian graph that has pretty good state transfer from $a$ to $b$.  Then, for $\varepsilon > 0$, there exists a time $\tau_1$  such that $U(\tau_1)e_a = \gamma_1 e_b + \rho_1$, for some phase factor $\gamma_1$ and vector $\rho_1$ with $\Vert \rho_1 \Vert < \frac{\varepsilon}{2}$.

As $U(t)$ is almost periodic, there exists $\tau_2 > \tau_1$ such that $U(\tau_2)e_a = \gamma_2 e_a + \rho_2$, for some phase factor $\gamma_2$ and some vector $\rho_2$ with $\Vert \rho_2 \Vert < \frac{\varepsilon}{2}$.
We have 
\begin{equation*}
 U(\tau_2-\tau_1)e_b = \overline{\gamma_1} U(\tau_2) \left( e_a - U(-\tau_1)\rho_1\right) 
 =\overline{\gamma_1}  \left( \gamma_2 e_a +\rho_2 - U(\tau_2-\tau_1)\rho_1\right).
\end{equation*}
Hence
\begin{equation*}
\Vert U(\tau_2-\tau_1)e_b- \overline{\gamma_1}\gamma_2 e_a \Vert 
= \Vert \rho_2 - U(\tau_2-\tau_1)\rho_1 \Vert 
\leq \Vert \rho_1\Vert+\Vert \rho_2\Vert <\varepsilon
\end{equation*}
and there is pretty good state transfer from $b$ to $a$.
\end{proof}

In \cite{zfkwlb}, Zimbor\'{a}s et al. assign a complex weight $e^{\ii \beta}$ to an edge in the following graph and use the weight to control the fidelity at $b$ and $c$ with initial state $e_a$. 
\begin{center}
\begin{tikzpicture}
\path (0,0) coordinate (00);
\fill (00) circle (3pt);
\path (1,0.75) coordinate (01);
\fill (01) circle (3pt);
\path (1,-0.75) coordinate (10);
\fill (10) circle (3pt);

\draw (00) -- (01);
\draw (00) -- (10);
\draw (01)--(10) node[currarrow,pos=0.5, sloped, scale=1.5] {};
\draw  (1,0) node[anchor=west]{{$e^{\ii \beta}$}};

\draw  (-5,0) node[anchor=east]{{$a$}};
\path (-5,0) coordinate (a1);
\fill (a1) circle (3pt);
\path (-4,0) coordinate (a2);
\fill (a2) circle (3pt);
\path (-1,0) coordinate (a3);
\fill (a3) circle (3pt);
\draw (a1)--(a2);
\draw (00)--(a3);
\draw [dotted] (a2)--(a3);

\draw  (6,0.75) node[anchor=west]{{$b$}};
\path (6,0.75) coordinate (b1);
\fill (b1) circle (3pt);
\path (5,0.75) coordinate (b2);
\fill (b2) circle (3pt);
\path (2,0.75) coordinate (b3);
\fill (b3) circle (3pt);
\draw (b1)--(b2);
\draw (01)--(b3);
\draw [dotted] (b2)--(b3);

\draw  (6,-0.75) node[anchor=west]{{$c$}};
\path (6,-0.75) coordinate (c1);
\fill (c1) circle (3pt);
\path (5,-0.75) coordinate (c2);
\fill (c2) circle (3pt);
\path (2,-0.75) coordinate (c3);
\fill (c3) circle (3pt);
\draw (c1)--(c2);
\draw (10)--(c3);
\draw [dotted] (c2)--(c3);
\end{tikzpicture}
\end{center}
This graph can be viewed as the rooted product of the weighted $K_3$ with a path.
Given  a graph $X$ on $n$ vertices and  a rooted graph $Y$ with root $a$. 
The rooted product of $X$ and $Y$, $X\circ Y$, is obtained by taking $n$ isomorphic copies of $Y$ and identifying the $j$-th vertex of $X$ with the root of the $j$-th copy of $Y$.
In this section, we give two families of rooted products that have multiple pretty good state transfer.

\subsection{Oriented 3-cycle rooted with a star}
\label{Subsection:RootedStar}

In \cite{Fan_2012}, Fan and Godsil show that the double star, the rooted product of $K_2$ and $K_{1,m}$, has pretty good state transfer between the two non-pendant vertices if and only if $4m+1$ is not a perfect square.  Note that $K_2$ is the only simple undirected graph with universal perfect state transfer.  
We extend their result to the rooted product of the oriented 3-cycle $\overrightarrow{K}_3$ with $\Star{m}$, where $\Star{m}$ denotes the star $K_{1,m}$ with the non-pendant vertex being its root.

\begin{center}
\begin{tikzpicture}

\path (-1,0) coordinate (a);
\fill (a) circle (3pt);
\draw  (-1,-0.1) node[anchor=north]{{$c$}};
\path (0,1.5) coordinate (b);
\fill (b) circle (3pt);
\draw  (0,1.4) node[anchor=north]{{$a$}};
\path (1,0) coordinate (c);
\fill (c) circle (3pt);
\draw  (1,-0.1) node[anchor=north]{{$b$}};

\draw (a)--(b) node[currarrow,pos=0.5, sloped, scale=1.5] {};
\draw (b)--(c) node[currarrow,pos=0.5, sloped, scale=1.5] {};
\draw (c)--(a) node[currarrow,pos=0.5, xscale=-1, sloped, scale=1.5] {};

\path (-2,1) coordinate (a1);
\fill (a1) circle (3pt);
\path (-2,0.5) coordinate (a2);
\fill (a2) circle (3pt);
\path (-2,-0.5) coordinate (a3);
\fill (a3) circle (3pt);
\path (-2,-1) coordinate (a4);
\fill (a4) circle (3pt);
\draw (a)--(a1);
\draw (a)--(a2);
\draw (a)--(a3);
\draw (a)--(a4);
\draw [dotted] (-2,0.25)--(-2,-0.25);

\path (-1,2.5) coordinate (b1);
\fill (b1) circle (3pt);
\path (-0.5,2.5) coordinate (b2);
\fill (b2) circle (3pt);
\path (0.5,2.5) coordinate (b3);
\fill (b3) circle (3pt);
\path (1,2.5) coordinate (b4);
\fill (b4) circle (3pt);
\draw (b)--(b1);
\draw (b)--(b2);
\draw (b)--(b3);
\draw (b)--(b4);
\draw [dotted] (-0.25,2.5)--(0.25,2.5);

\path (2,1) coordinate (c1);
\fill (c1) circle (3pt);
\path (2,0.5) coordinate (c2);
\fill (c2) circle (3pt);
\path (2,-0.5) coordinate (c3);
\fill (c3) circle (3pt);
\path (2,-1) coordinate (c4);
\fill (c4) circle (3pt);
\draw (c)--(c1);
\draw (c)--(c2);
\draw (c)--(c3);
\draw (c)--(c4);
\draw [dotted] (2,0.25)--(2,-0.25);
\end{tikzpicture}
\end{center}

\begin{lemma}
\label{Lem:RootStar}
Suppose $a$ and $b$ are strongly cospectral vertices in the Hermitian graph $X$ on $n\geq 2$ vertices.
Then they are strongly cospectral in the rooted product $X\circ \Star{m}$.
\end{lemma}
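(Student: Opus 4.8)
The plan is to analyze the eigenvectors of the rooted product $X \circ \Star{m}$ by relating them to the eigenvectors of $X$ itself, together with the "local" eigenvectors coming from the star part. The key structural fact is that the Hermitian matrix of $X \circ \Star{m}$ has a block form: indexing first the $n$ copies of the root-vertex and then grouping the $m$ pendant vertices of each copy, one gets a matrix of the shape $H = \begin{bmatrix} H_X & C^T \otimes \text{(something)} \\ \ast & 0 \end{bmatrix}$, where the pendant-to-pendant block vanishes because $\Star{m}$ has no edges among its leaves. I would set this up cleanly first, so that an eigenvector of $H$ can be written as a pair $(u, w)$ with $u \in \mathbb{C}^n$ supported on the roots and $w$ supported on the $nm$ pendant vertices.

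Next I would carry out the eigenvector transfer. If $H_X v = \theta v$, one looks for an eigenvector $(u,w)$ of $H$ with eigenvalue $\mu$; the pendant equations force each pendant entry to be a scalar multiple of the corresponding root entry (with the scalar depending only on $\mu$ through a rational function $f(\mu)$, since all leaves attached to a given root are symmetric), and substituting back into the root equations yields $H_X u = g(\mu)\, u$ for an appropriate scalar function $g$. Hence $u$ must be a $\theta$-eigenvector of $H_X$, i.e. $u$ is a scalar multiple of $v$, and $\mu$ ranges over the (at most two, generically two) solutions of $g(\mu) = \theta$. The crucial point: for every such $\mu$, the corresponding $H$-eigenvector restricted to the root vertices $a$ and $b$ has entries $v_a$ and $v_b$ scaled by the same factor. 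Therefore the projection $E_\mu^{(H)} e_a$ and $E_\mu^{(H)} e_b$, when restricted to the root coordinates, are proportional with the same ratio as $E_\theta^{(H_X)} e_a$ and $E_\theta^{(H_X)} e_b$. Since $a,b$ are strongly cospectral in $X$, Equation~(\ref{Eqn:Quarrel}) gives $E_\theta^{(H_X)} e_a = e^{\ii q_\theta} E_\theta^{(H_X)} e_b$, and I would show this forces $E_\mu^{(H)} e_a = e^{\ii q_\theta} E_\mu^{(H)} e_b$ as full vectors in $\mathbb{C}^{n(m+1)}$ — the pendant coordinates automatically match because they are determined by the root coordinates via the same symmetric rule. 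One must also handle the "extra" eigenvalues of $H$ whose eigenvectors are supported entirely on pendant vertices (the differences of leaves within a copy): for these, $E_\mu e_a = 0 = E_\mu e_b$ for both root vertices $a,b$, so they do not obstruct strong cospectrality and in fact lie outside $\Phi_a \cup \Phi_b$.

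The main obstacle I expect is bookkeeping around multiplicities and the map $\theta \mapsto \{\mu : g(\mu) = \theta\}$: I need to verify that distinct eigenvalues $\theta \neq \theta'$ of $H_X$ produce disjoint sets of $\mu$'s, that the spectral idempotent $E_\mu^{(H)}$ genuinely splits as claimed (no mixing between different $\theta$-branches), and that the degenerate cases — for instance when $g(\mu) = \theta$ has a repeated root, or when $0 \in \Phi_a$ interacts with the pendant-supported eigenspace — are handled correctly. A clean way to sidestep some of this is to exhibit an explicit $n(m+1) \times n(m+1)$ "lift" operator $L_\mu$ sending a $\theta$-eigenvector $v$ of $H_X$ to the corresponding $\mu$-eigenvector of $H$ (linear in $v$, depending only on $\mu$ and the graph $\Star{m}$), observe that $L_\mu$ commutes with the permutation-symmetry swapping $a$ and $b$ appropriately, and read off the quarrel relation. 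Once the eigenvector correspondence is pinned down, verifying condition~(\ref{Eqn:Quarrel}) for $X \circ \Star{m}$ is immediate, and the conclusion that $a$ and $b$ are strongly cospectral in $X \circ \Star{m}$ follows. I would also remark that $n \geq 2$ is used only to ensure the product graph is nontrivial (so that $a \neq b$ as vertices of the product), and that the quarrels in the product are inherited from those in $X$ on the common branch of eigenvalues.
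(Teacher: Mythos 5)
Your proposal is correct and follows essentially the same route as the paper: the paper makes your ``lift'' explicit by writing the spectral decomposition of $H_{X\circ \Star{m}}$ with idempotents $F_r^{\pm}$ whose root-vertex block is a nonzero scalar multiple of $E_r$ (for the two eigenvalues $\lambda_r^{\pm}=\tfrac{1}{2}(\theta_r\pm\sqrt{\theta_r^2+4m})$, i.e.\ your two solutions of $g(\mu)=\theta_r$), together with a leaf-supported idempotent $F_0$ annihilating $e_a$ and $e_b$. The disjointness issue you flag does check out ($\lambda^+\lambda^-=-m\neq 0$, so distinct $\theta$'s yield distinct $\mu$'s and none equal $0$), and the quarrels are inherited exactly as you describe.
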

\begin{proof}
Let $H_X$ be the Hermitian matrix associated with $X$ with spectral decomposition $H_X=\sum_{r=1}^d \theta_r E_r$ .
Then the matrix associated with the rooted product $Y=X\circ \Star{m}$ is 
\begin{equation*}
H_Y = \begin{bmatrix} 1&0&0&\cdots&0 \\  0&0&0&\cdots&0 \\  \vdots& \vdots & \vdots &\ddots & \vdots\\ 0&0&0&\cdots&0 \\0&0&0&\cdots&0 \end{bmatrix}  \otimes H_X + \begin{bmatrix} 0&1&1&\cdots&1 \\  1 & 0&0&\cdots&0\\  \vdots& \vdots & \vdots &\ddots & \vdots\\1 & 0&0&\cdots&0\\ 1 & 0&0&\cdots&0\end{bmatrix} \otimes I_n.
\end{equation*}
For $r=1,\ldots, d$, define
\begin{equation*}
\lambda_r^{\pm} = \frac{\theta_r \pm \sqrt{\theta_r^2+4m}}{2},
\end{equation*}
and
\begin{equation*}
F_r^{\pm} = \frac{1}{(\lambda_r^{\pm})^2+m}
\begin{bmatrix} (\lambda_r^{\pm})^2 &\lambda_r^{\pm}&\lambda_r^{\pm}&\cdots&\lambda_r^{\pm} \\  \lambda_r^{\pm}&1&1&\cdots&1 \\  \vdots& \vdots & \vdots &\ddots & \vdots\\ \lambda_r^{\pm}&1&1&\cdots&1 \\\lambda_r^{\pm}&1&1&\cdots&1 \\\end{bmatrix}
 \otimes E_r.
\end{equation*}
Define
\begin{equation*}
F_0 = \begin{bmatrix} 0 & &\0_m\\ &&\\ \0_m &&I_m-\frac{1}{m}J_m\end{bmatrix}\otimes I_n.
\end{equation*}
Then $H_Y$ has spectral decomposition
\begin{equation}
\label{Eqn:RootStar}
H_Y= 0 \cdot F_0 + \sum_{r=1}^d \left(\lambda_r^+ \cdot F_r^+ + \lambda_r^- \cdot F_r^-\right).
\end{equation}
Note that the $(1,1)$-block are indexed by the vertices in $X$ and the eigenvalue $0$ is not in the support of $a$ nor $b$.
The result follows from the $(1,1)$-block of $F_r^+$ and $F_r^-$ being non-zero scalar multiple of $E_r$.
\end{proof}

\begin{corollary}
Suppose $X$ is a Hermitian graph with universal perfect state transfer with spectrum $\Phi$.  
Let $S$ be the set of non-pendant vertices in $X\circ \Star{m}$.
Let 
\begin{equation*}
\Psi = \left\{\frac{\theta \pm \sqrt{\theta^2+4m}}{2} \ \big\vert\ \theta \in \Phi\right\}.
\end{equation*}
If $\Psi$ is linearly independent over $\QQ$, then $X\circ \Star{m}$ has multiple pretty good state transfer on $S$.
\end{corollary}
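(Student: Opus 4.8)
The plan is to package three ingredients already in hand: the structural consequences of universal perfect state transfer on $X$ (Theorem~\ref{Thm:UPST2014}), the transfer of strong cospectrality through the rooted product (Lemma~\ref{Lem:RootStar}), and the Kronecker-type criterion for pretty good state transfer (Theorem~\ref{Thm:PGST}); the punchline is that the linear independence hypothesis on $\Psi$ makes condition~(\ref{Cond:PGST2}) of Theorem~\ref{Thm:PGST} hold for the trivial reason that there are no nontrivial integer relations among the eigenvalues in play. First I would record that, since $X$ has universal perfect state transfer, every pair of vertices of $X$ undergoes perfect state transfer and is therefore strongly cospectral, and by Theorem~\ref{Thm:UPST2014} all eigenvalues $\theta_1,\dots,\theta_n$ of $H_X$ are simple while a diagonalizing unitary $P$ has $|P_{a,r}| = n^{-1/2}$; hence $E_r e_a \neq 0$ for every vertex $a$ and every $r$, i.e. every vertex of $X$ has full eigenvalue support $\Phi = \{\theta_1,\dots,\theta_n\}$. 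Fixing two non-pendant vertices $a,b$ of $Y = X\circ\Star{m}$ (identified with the roots of their copies of $\Star{m}$), Lemma~\ref{Lem:RootStar} gives that $a$ and $b$ are strongly cospectral in $Y$, so the quarrels $q_r(a,b)$ are well defined.

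Next I would identify the eigenvalue support of $a$ in $Y$ from the spectral decomposition~(\ref{Eqn:RootStar}). Because $a$ corresponds to a vector supported on the root coordinate of its copy of $\Star{m}$, we have $F_0 e_a = 0$, while each $F_r^{\pm} e_a$ is a nonzero vector tensored with $E_r e_a \neq 0$ (the $(1,1)$-block of $F_r^{\pm}$ being $\tfrac{(\lambda_r^{\pm})^2}{(\lambda_r^{\pm})^2+m}\neq 0$ times $E_r$). Hence the eigenvalue support of $a$, and likewise of $b$, in $Y$ is exactly $\Psi = \{\lambda_r^{\pm}\}$ in the notation of Lemma~\ref{Lem:RootStar}. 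Since $m\geq 1$, one has $\lambda_r^+ > 0 > \lambda_s^-$ for all $r,s$, and both maps $\theta\mapsto \tfrac{\theta\pm\sqrt{\theta^2+4m}}{2}$ are strictly monotone, so the $2n$ numbers in $\Psi$ are pairwise distinct; in particular linear independence of $\Psi$ over $\QQ$ is a meaningful hypothesis about $2n$ distinct reals.

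Finally I would apply Theorem~\ref{Thm:PGST} to the pair $a,b$ in $Y$. Condition~(\ref{Cond:PGST1}) is the strong cospectrality established above. For condition~(\ref{Cond:PGST2}), suppose integers $(l_\lambda)_{\lambda\in\Psi}$ satisfy $\sum_{\lambda\in\Psi} l_\lambda\lambda = 0$; since $\Psi$ is linearly independent over $\QQ$, this forces $l_\lambda = 0$ for every $\lambda$, and then $\sum_{\lambda\in\Psi} l_\lambda\bigl(q_\lambda(a,b)+\delta\bigr) = 0 \pmod{2\pi}$ holds trivially, say with $\delta = 0$. Therefore $Y$ has pretty good state transfer from $a$ to $b$, and as $a,b$ were arbitrary non-pendant vertices, $X\circ\Star{m}$ has multiple pretty good state transfer on $S$.

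I do not expect a serious obstacle: the genuine computation is Lemma~\ref{Lem:RootStar} (the spectral idempotents of the rooted product), which we are free to invoke, and the rest is bookkeeping with Theorems~\ref{Thm:UPST2014} and~\ref{Thm:PGST}. The two points that merit care are (i) checking that a non-pendant vertex of $Y$ has eigenvalue support equal to \emph{all} of $\Psi$ — so that it is linear independence of the full set $\Psi$, not of some subset, that is needed — and (ii) checking that the $2n$ elements of $\Psi$ are distinct, which is what makes ``linearly independent over $\QQ$'' the correct normalization of the hypothesis; both are immediate from $m\geq 1$ and the simplicity of the eigenvalues of $H_X$.
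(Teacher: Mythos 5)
Your proposal is correct and follows the same route as the paper's own proof: invoke Lemma~\ref{Lem:RootStar} for strong cospectrality of the roots, read off from Equation~(\ref{Eqn:RootStar}) that the eigenvalue support of a non-pendant vertex is exactly $\Psi$, and note that linear independence of $\Psi$ over $\QQ$ makes Condition~(\ref{Cond:PGST2}) of Theorem~\ref{Thm:PGST} hold vacuously. The paper states this in three sentences; your version merely fills in the details (full eigenvalue support via Theorem~\ref{Thm:UPST2014}, nonvanishing of the $(1,1)$-blocks of $F_r^{\pm}$, distinctness of the elements of $\Psi$), all of which check out.
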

\begin{proof}
For $a,b \in S$, there is perfect state transfer between $a$ and $b$ in $X$, 
so $a$ and $b$ are strongly cospectral in $X\circ \Star{m}$ by Lemma~\ref{Lem:RootStar}. 
We see in Equation~(\ref{Eqn:RootStar}) that $\Psi$ is the eigenvalue support of $a$ in the rooted product.  It follows from Theorem~\ref{Thm:PGST} that pretty good state transfer occurs between
$a$ and $b$ in $X\circ \Star{m}$.
\end{proof}

In the following result, we focus on $X=\overrightarrow{K}_3$ which has spectral decomposition
\begin{equation*}
 \begin{bmatrix} 0&-\ii&\ii\\ \ii &0&-\ii \\ -\ii&\ii&0\end{bmatrix}
=0 \cdot \frac{1}{3} J_3  + \sqrt{3} \cdot \frac{1}{3} \begin{bmatrix}1&e^{-2\pi\ii/3}&e^{2\pi\ii/3}\\e^{2\pi\ii/3}&1&e^{-2\pi\ii/3}\\ e^{-2\pi\ii/3}& e^{2\pi\ii/3} & 1 \end{bmatrix}
- \sqrt{3} \cdot \frac{1}{3} \begin{bmatrix}1&e^{2\pi\ii/3}&e^{-2\pi\ii/3}\\e^{-2\pi\ii/3}&1&e^{2\pi\ii/3}\\ e^{2\pi\ii/3}& e^{-2\pi\ii/3} & 1\end{bmatrix}.
\end{equation*}
Hence any two vertices in $\overrightarrow{K}_3$ are strongly cospectral.  Let $V(\overrightarrow{K}_3)=\{a,b,c\}$.
Then the eigenvalue support of $a$ in $\overrightarrow{K}_3 \circ \Star{m}$ are $\lambda_1= \sqrt{m}$, $\lambda_2=-\sqrt{m}$,
\begin{equation*}
 \lambda_3=\frac{\sqrt{3}+\sqrt{3+4m}}{2},\quad
\lambda_4=\frac{\sqrt{3}-\sqrt{3+4m}}{2}, \quad
\lambda_5=\frac{-\sqrt{3}+\sqrt{3+4m}}{2} \quad\text{and}\quad \lambda_6=\frac{-\sqrt{3}-\sqrt{3+4m}}{2}.
\end{equation*}
From Equation~(\ref{Eqn:RootStar}), the quarrels in $\overrightarrow{K}_3 \circ \Star{m}$ are
\begin{equation*}
q_r(a,b) = 
\begin{cases}
0 & \text{if $r=1,2$,}\\
\frac{2\pi}{3} & \text{if $r=3,4$, and}\\
-\frac{2\pi}{3} & \text{if $r=5,6$.}
\end{cases}
\end{equation*}
\ignore{
and
\begin{equation*}
q_r(a,b) = 
\begin{cases}
0 & \text{if $r=1,2$,}\\
-\frac{2\pi}{3} & \text{if $r=3,4$, and}\\
\frac{2\pi}{3} & \text{if $r=5,6$.}
\end{cases}
\end{equation*}
}

\begin{theorem}
\label{Thm:RootStar}
The rooted product $\overrightarrow{K}_3 \circ \Star{m}$ admits multiple pretty good state transfer on the set
$\{a,b,c\}$ of non-pendant vertices if and only if one of the following holds.
\begin{enumerate}
\item
\label{Cond:RootStar1}
$\gcd(3, m)=1$.
\item
\label{Cond:RootStar2}
$m=3s$, for some integer $s$ such that neither $s$ nor $4s+1$ are perfect square.
\item
\label{Cond:RootStar3}
$m=27 k^2$, for some integer $k$. 
\item
\label{Cond:RootStar4}
$m=27k^2+27k+6$, for some integer $k$.
\end{enumerate}
\end{theorem}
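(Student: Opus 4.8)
The plan is to apply Theorem~\ref{Thm:PGST} with the eigenvalue support $\Phi_a = \{\lambda_1,\dots,\lambda_6\}$ and the explicit quarrels $q_r(a,b)$ listed above. Since strong cospectrality of $a$ and $b$ (and of all pairs in $\{a,b,c\}$) in the rooted product follows from Lemma~\ref{Lem:RootStar} and the strong cospectrality of any two vertices of $\overrightarrow{K}_3$, Condition~(\ref{Cond:PGST1}) is automatic; everything reduces to the Kronecker-type Condition~(\ref{Cond:PGST2}). So the real content is: for which $m$ does there exist $\delta\in\RR$ such that every integer relation $\sum_{r=1}^6 l_r\lambda_r=0$ forces $\sum_{r=1}^6 l_r(q_r+\delta)\equiv 0\pmod{2\pi}$? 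First I would pin down the $\QQ$-linear relations among the $\lambda_r$. Note $\lambda_1+\lambda_2=0$, $\lambda_3+\lambda_4=\sqrt 3$, $\lambda_5+\lambda_6=-\sqrt 3$, so $\lambda_3+\lambda_4+\lambda_5+\lambda_6=0$; also $\lambda_3+\lambda_6=0$ and $\lambda_4+\lambda_5=0$ individually. The lattice of relations therefore depends on the arithmetic of $\sqrt m$, $\sqrt 3$, and $\sqrt{3+4m}$ over $\QQ$ — in particular on whether $m$ is a perfect square (making $\lambda_1,\lambda_2$ rational), whether $3+4m$ is a perfect square, and whether $\sqrt m$ and $\sqrt{3+4m}$ (and $\sqrt 3$) are $\QQ$-dependent, which happens exactly when $3\mid m$ and the relevant squarefree parts coincide. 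This case analysis on the multiplicative structure of $m$ is what produces the four listed cases.

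Concretely, I would split on $\gcd(3,m)$. \textbf{Case $\gcd(3,m)=1$:} here $\sqrt 3\notin\QQ(\sqrt m,\sqrt{3+4m})$ essentially because $3\nmid m$ keeps the $3$-adic valuations from lining up, so the only relations are the ``obvious'' ones generated by $\lambda_1+\lambda_2=0$ and $\lambda_3+\lambda_6=0$, $\lambda_4+\lambda_5=0$. For the relation $l(\lambda_1+\lambda_2)=0$ we need $l(q_1+\delta)+l(q_2+\delta)=2l\delta\equiv 0$, i.e.\ $\delta\in\pi\QQ$ with bounded denominator; for $\lambda_3+\lambda_6=0$ we need $(q_3+\delta)+(q_6+\delta)=\tfrac{2\pi}{3}-\tfrac{2\pi}{3}+2\delta=2\delta\equiv 0$, same constraint; likewise $\lambda_4+\lambda_5$. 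Choosing $\delta=0$ works, so Condition~(\ref{Cond:RootStar1}) always gives multiple PGST — and one checks PGST between $a$ and $c$, and $b$ and $c$, symmetrically since the quarrels just permute by a cube root of unity. \textbf{Case $3\mid m$, write $m=3s$:} now $\sqrt m=\sqrt 3\sqrt s$, so $\lambda_1=\sqrt 3\sqrt s$ and $\lambda_3,\dots,\lambda_6$ involve $\sqrt 3$ and $\sqrt{3+4m}=\sqrt{3+12s}=\sqrt 3\sqrt{1+4s}$; everything becomes $\sqrt 3$ times an element of $\QQ(\sqrt s,\sqrt{1+4s})$. Scaling out $\sqrt 3$, the relation lattice is governed by $\sqrt s$ and $\sqrt{4s+1}$: extra relations appear precisely when $s$ is a perfect square (then $\lambda_1,\lambda_2$ become rational multiples of $\sqrt 3$, forcing a new relation like $\lambda_1-\lambda_2-(\lambda_3-\lambda_4)=$ something), or when $4s+1$ is a perfect square (then $\lambda_3,\lambda_4,\lambda_5,\lambda_6$ become rational multiples of $\sqrt3$), or when $\sqrt s$ and $\sqrt{4s+1}$ are themselves $\QQ$-dependent. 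The generic sub-subcase — neither $s$ nor $4s+1$ a perfect square and no spurious dependence — is exactly Condition~(\ref{Cond:RootStar2}), and there $\delta=0$ again satisfies Kronecker's criterion. This mirrors the Fan–Godsil double-star analysis where ``$4m+1$ not a perfect square'' was the criterion for the $K_2$ case.

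The delicate part — and where I expect the main obstacle to lie — is the remaining sub-subcases of $3\mid m$ where $s$ \emph{is} a perfect square or $4s+1$ \emph{is} a perfect square (or both), because then the relation lattice acquires new generators and Condition~(\ref{Cond:PGST2}) becomes a genuine constraint on $\delta$ that can fail. One must solve the resulting congruences: a new relation $\sum l_r\lambda_r=0$ with $\sum l_r\ne 0$ forces $(\sum l_r)\delta\equiv -\sum l_r q_r\pmod{2\pi}$, which pins $\delta$ modulo a rational multiple of $\pi$, and then \emph{every} other relation must be consistent with that forced value. Running these consistency conditions through the quarrel values $0,\tfrac{2\pi}{3},-\tfrac{2\pi}{3}$ — which live in $\tfrac{2\pi}{3}\ZZ$ — turns the problem into a congruence in $\ZZ/3$ or $\ZZ/9$, and working out when it is solvable should produce the Pell-like parametrizations $m=27k^2$ (Condition~(\ref{Cond:RootStar3})) and $m=27k^2+27k+6$ (Condition~(\ref{Cond:RootStar4})); the quadratic $27k^2+27k+6=3(3k+1)(3k+2)$ suggests the case $s=3k^2+3k+? $ with $4s+1$ a perfect square, i.e.\ $4(9k^2+9k+2)+1=(6k+3)^2$, confirming this is the ``$4s+1$ square'' branch intersected with the divisibility that makes the $\delta$-congruence solvable, while $m=27k^2$ is the ``$s=3k^2$ perfect-square-compatible'' branch. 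The plan is: (1) enumerate the relation lattice in each arithmetic case via valuations and squarefree parts; (2) in each case write the forced value(s) of $\delta$; (3) check Kronecker consistency, and when it fails, extract the exact divisibility condition on $m$; (4) assemble the four cases and verify they are mutually exhaustive of the PGST-admitting $m$; (5) note the $a\!-\!c$ and $b\!-\!c$ transfers follow by the $\ZZ_3$-symmetry of the quarrels. The hard part is step (3)–(4): showing the congruence analysis really collapses to exactly these two Pell families and nothing is missed.
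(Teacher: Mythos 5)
Your plan is essentially the paper's proof: reduce to transfer from $a$ to $b$ via the cyclic automorphism, get strong cospectrality from Lemma~\ref{Lem:RootStar}, and then test Condition~(\ref{Cond:PGST2}) of Theorem~\ref{Thm:PGST} by determining the integer relation lattice of $\{\lambda_1,\dots,\lambda_6\}$ through the $\QQ$-linear independence of $\sqrt{m}$, $\sqrt{3}$, $\sqrt{3+4m}$ (factoring out $\sqrt{3}$ when $3\mid m$). Your identification of the two exceptional families is also the right one: $m=27k^2$ comes from $s=(3k)^2$ a perfect square, and $m=27k^2+27k+6$ from $4s+1=(6k+3)^2$. (Two small points: where you write ``$s=3k^2$'' you mean $s=9k^2$; and the sub-subcase ``$\sqrt{s}$, $\sqrt{4s+1}$ $\QQ$-dependent'' is vacuous, since $\gcd(s,4s+1)=1$ would force both to be perfect squares, impossible for $s\ge 1$. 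Likewise $3+4m$ is never a perfect square, being $3 \bmod 4$.)

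The genuine gap is that the ``only if'' direction --- showing pretty good state transfer actually \emph{fails} in the perfect-square branches when the extra divisibility by $3$ is absent --- is only asserted (``should produce'', ``the hard part''), not carried out, and your proposed mechanism is slightly misstated: the relation that pins down $\delta$ is not a ``new'' relation created by the degeneracy but the ever-present one $\lambda_3+\lambda_6=0$ (take $l_3=l_6=1$, all other $l_r=0$), which gives $\sum_r l_r(q_r+\delta)=2\delta$ and forces $\delta\in\ZZ\pi$. The degeneracy then supplies a second witness that is inconsistent with $\delta\in\ZZ\pi$: for $m=3h^2$ with $3\nmid h$, the relation $\lambda_1+h\lambda_5+h\lambda_6=h\sqrt{3}-h\sqrt{3}=0$ yields $\sum_r l_r(q_r+\delta)=-\tfrac{4h\pi}{3}+(2h+1)\delta\not\equiv 0\pmod{2\pi}$; for $4s+1=h^2$ with $3\nmid h$ one uses $l_3=l_4=h$, $l_5=-1$, $l_6=1$, giving $\tfrac{4h\pi}{3}+2h\delta\not\equiv 0\pmod{2\pi}$. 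Conversely, when $3\mid h$ the forced congruences are satisfied with $\delta=0$, which is what turns ``$s$ a perfect square'' into $m=27k^2$ and ``$4s+1$ a perfect square'' into $m=27k^2+27k+6$. Exhibiting these explicit integer vectors (rather than a general $\ZZ/3$ or $\ZZ/9$ congruence analysis) is all that is needed to close your steps (3)--(4), so the obstacle you anticipate is real but small.
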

\begin{proof}
Since $\overrightarrow{K}_3 \circ \Star{m}$ has an automorphism that maps $a$ to $b$, $b$ to $c$ and $c$ to $a$, it is sufficient
to prove that there is pretty good state transfer from $a$ to $b$ in the rooted product.

By Lemma~\ref{Lem:RootStar}, Condition~(\ref{Cond:PGST1}) of Theorem~\ref{Thm:PGST} holds.
For Condition~(\ref{Cond:PGST2}) of Theorem~\ref{Thm:PGST}, we consider
integers $l_1, \ldots, l_6$  satisfying
\begin{equation}
\label{Eqn:K1}
\sum_{r=1}^6 l_r \lambda_r=
\left(l_1-l_2\right)\sqrt{m} + \left(\frac{l_3+l_4-l_5-l_6}{2}\right)\sqrt{3} + \left(\frac{l_3-l_4+l_5-l_6}{2}\right)\sqrt{3+4m} = 0.
\end{equation}

\begin{enumerate}[C{a}se 1:]
\item[C{a}se 1:]
If $\gcd(3,m)=1$ then the set $\{\sqrt{3}, \sqrt{m}, \sqrt{3+4m}\}$ is linearly independent over $\QQ$.
Equation~(\ref{Eqn:K1}) implies $(l_3+l_4-l_5-l_6)/2=0$ and
\begin{equation}
\label{Eqn:K2}
\sum_{r=1}^6 l_r q_r(a,b) = \left(l_3+l_4-l_5-l_6\right) \frac{2\pi}{3} = 0 \pmod{2\pi}.
\end{equation}
Condition~(\ref{Cond:PGST2}) of Theorem~\ref{Thm:PGST} holds with $\delta=0$, so there is pretty good state transfer from $a$ to $b$ in $\overrightarrow{K}_3 \circ \Star{m}$.

\item[C{a}se 2:]
When $m=3s$, Equation~(\ref{Eqn:K1}) becomes
\begin{equation*}
\left(l_1-l_2\right)\sqrt{s} + \left(\frac{l_3+l_4-l_5-l_6}{2}\right) + \left(\frac{l_3-l_4+l_5-l_6}{2}\right)\sqrt{1+4s} = 0.
\end{equation*}
If $s$ and $4s+1$ are not perfect squares then $\{1,\sqrt{s}, \sqrt{1+4s}\}$ is linearly independent over $\QQ$ and
Equation~(\ref{Eqn:K1}) implies Equation~(\ref{Eqn:K2}).  Hence there is pretty good state transfer from $a$ to $b$.

\item[C{a}se 3:]
Suppose $m=3h^2$, for some integer $h$.  Then $4h^2+1$ is not a perfect square, and Equation~(\ref{Eqn:K1}) becomes
\begin{equation*}
 \left(\frac{2h(l_1-l_2)+ l_3+l_4-l_5-l_6}{2}\right) + \left(\frac{l_3-l_4+l_5-l_6}{2}\right)\sqrt{4h^2+1} = 0,
\end{equation*}
which implies $l_3+l_4-l_5-l_6 = -2h(l_1-l_2)$.
If $h=3k$, for some integer $k$, then Equation~(\ref{Eqn:K2}) holds and pretty good state transfer occurs from $a$ to $b$.

Suppose $h$ is not divisible by $3$.
Equation~(\ref{Eqn:K1}) holds when $l_1=l_2=l_4=l_5=0$ and $l_3=l_6=1$.
Since
\begin{equation*}
\sum_{r=1}^6 l_r \left(q_r(a,b) + \delta\right)=2\delta,
\end{equation*}
Equation~(\ref{Eqn:PGSTK2}) holds if and only if $\delta \in \ZZ \pi$.

Equation~(\ref{Eqn:K1}) also holds when $l_1=1, l_2=l_3=l_4=0, l_5=l_6=h$, but
\begin{equation*}
\sum_{r=1}^6 l_r (q_r(a,b)+\delta) =  -\frac{4h\pi}{3} + (2h+1)\delta \neq 0 \pmod{2\pi}
\end{equation*}
when $\delta \in \ZZ \pi$.
We conclude that pretty good state transfer from $a$ to $b$ does not occur.

\item[C{a}se 4:]
Suppose $m=3s$ with $4s+1=h^2$, for some integer $h$. Then $s$ is not a perfect square, and Equation~(\ref{Eqn:K1}) becomes
\begin{equation*}
 (l_1-l_2)\sqrt{s} + \frac{(l_3+l_4-l_5-l_6) + h(l_3-l_4+l_5-l_6)}{2}= 0,
\end{equation*}
which implies $l_3+l_4-l_5-l_6 = -h(l_3-l_4+l_5-l_6)$.
If $h$ is divisible by $3$ then Equation~(\ref{Eqn:K2}) holds and pretty good state transfer occurs from $a$ to $b$.
In this case, $m=27k^2+27k+6$ if we write $4s+1 = 3^2(2k+1)^2$.

If $h$ is not divisible by $3$,  Equation~(\ref{Eqn:K1}) holds when $l_1=l_2=l_4=l_5=0$, $l_3=l_6=1$ and when
$l_1=l_2=0$, $l_3=l_4=h$, $l_5=-1$ and $l_6=1$.  Using the same argument as in the previous case, we see that
there does not exist $\delta$ satisfying Equation~(\ref{Eqn:PGSTK2}) for both assignments for the $l_j$'s.
We conclude that pretty good state transfer from $a$ to $b$ does not occur.
\end{enumerate}
\end{proof}

\subsection{Circulants rooted with a looped path}
\label{Subsection:RootedPath}

In \cite{kly17}, Kempton et al. show that a path with a loop on each end-vertex with transcendental weight $\gamma$ has pretty good state transfer between the two end-vertices.  
We use $\lp{m}$ to denote the rooted path on vertices $\{1, 2, \ldots, m\}$ that has root $m$ and a loop on vertex $1$ with weight $\gamma$.
Then the path of length $2m-1$ with a loop of weight $\gamma$ on each end-vertex studied in \cite{kly17} can be viewed as the rooted product of $K_2$ with $\lp{m}$. 

\begin{center}
\begin{tikzpicture}

\draw  (2.5,0) node[below]{{Path $\lp{m}$ rooted at $m$ with a loop at $1$}};

\path (0,1) coordinate (b1);
\path (1,1) coordinate (b2);
\path (4,1) coordinate (b3);
\path (5,1) coordinate (b4);

\draw (b1)--(b2);
\draw (b3)--(b4);
\draw [dotted] (b2)--(b3);

\fill (b1) circle (3pt);
\fill (b2) circle (3pt);
\fill (b3) circle (3pt);
\filldraw [thick,fill=white] (b4) circle (3pt);

\draw  (0,0.75) node[anchor=north]{{$1$}};
\draw  (1,0.75) node[anchor=north]{{$2$}};
\draw  (5,0.75) node[anchor=north]{{$m$}};

\draw (b1) to [out=135,in=180] (0,2) to [out=0, in=45] (b1);
\draw  (0,2) node[anchor=south]{{$\gamma$}};

\end{tikzpicture}
\end{center}

We extend their result to the rooted product $X \circ \lp{m}$ where $X$
is Hermitian circulant with rational eigenvalues that admits universal perfect state transfer.
Orthogonal polynomials and field trace are the main tools used in this section.   Please see Chapter 8 of \cite{AC} for the background of orthogonal polynomials, and see \cite{kly17} and Chapter 14 of \cite{df} for some basic facts on field trace.

Suppose $V(X)=\{x_0,x_1,\ldots,x_{n-1}\}$.  Then we label the vertices of $X\circ \lp{m}$ 
with the ordered pair $(x_h, j)$ denoting the $j$-th vertex on $\lp{m}$ that is rooted at $x_h$ in $X$, for $h=0,1,\ldots, n-1$ and $j=1,\ldots, m$.

\begin{center}
\begin{tikzpicture}

\path (0,0) coordinate (00);
\fill (00) circle (3pt);
\draw (-0.25,0) node[above]{$(x_0,m)$};
\path (1,1) coordinate (01);
\fill (01) circle (3pt);
\draw (01) node[above]{$(x_1,m)$};
\path (1,-1) coordinate (10);
\fill (10) circle (3pt);
\draw (10) node[below]{$(x_2,m)$};

\draw (01)--(10) node[currarrow,pos=0.5, sloped, scale=1.5] {};
\draw (10)--(00) node[currarrow,pos=0.5, sloped, scale=1.5] {};
\draw (00)--(01) node[currarrow,pos=0.5, sloped, scale=1.5] {};

\path (-5,0) coordinate (a1);
\fill (a1) circle (3pt);
\draw (a1) node[left]{$(x_0,1)$};
\path (-4,0) coordinate (a2);
\fill (a2) circle (3pt);
\draw (a2) node[below]{$(x_0,2)$};
\path (-1,0) coordinate (a3);
\fill (a3) circle (3pt);
\draw (a1)--(a2);
\draw (00)--(a3);
\draw [dotted] (a2)--(a3);
\draw (a1) to [out=135,in=180] (-5,1) to [out=0, in=45] (a1);
\draw  (-5,1) node[anchor=south]{{$\gamma$}};

\path (6,1) coordinate (b1);
\fill (b1) circle (3pt);
\draw (b1) node[right]{$(x_1,1)$};
\path (5,1) coordinate (b2);
\fill (b2) circle (3pt);
\draw (b2) node[below]{$(x_1,2)$};
\path (2,1) coordinate (b3);
\fill (b3) circle (3pt);
\draw (b1)--(b2);
\draw (01)--(b3);
\draw [dotted] (b2)--(b3);
\draw (b1) to [out=135,in=180] (6,2) to [out=0, in=45] (b1);
\draw  (6,2) node[anchor=south]{{$\gamma$}};

\path (6,-1) coordinate (c1);
\fill (c1) circle (3pt);
\draw (c1) node[right]{$(x_2,1)$};
\path (5,-1) coordinate (c2);
\fill (c2) circle (3pt);
\draw (c2) node[below]{$(x_2,2)$};
\path (2,-1) coordinate (c3);
\fill (c3) circle (3pt);
\draw (c1)--(c2);
\draw (10)--(c3);
\draw [dotted] (c2)--(c3);
\draw (c1) to [out=135,in=180] (6,0) to [out=0, in=45] (c1);
\draw  (6,0) node[anchor=south]{{$\gamma$}};

\draw (0,-2) node[below]{The rooted product of $\overrightarrow{K}_3$ with $\lp{m}$};
\end{tikzpicture}
\end{center}

Let $H_X$ be the matrix of the Hermitian circulant $X$ with universal perfect state transfer.
It follows from Theorem~8 of \cite{USTG} that the  eigenvalues of $H_X$ are simple.
Given distinct eigenvalues  $\theta_0, \theta_1, \ldots, \theta_{n-1}$ of $H_X$
and the discrete Fourier matrix of order $n$
\begin{equation*}
F_n = \frac{1}{\sqrt{n}}
\begin{bmatrix}
1&1&1&\cdots & 1\\
1&\zeta&\zeta^2&\cdots & \zeta^{n-1}\\
1&\zeta^2&\zeta^4&\cdots & \zeta^{2(n-1)}\\
\vdots&\vdots&\vdots&\ddots&\vdots\\
1&\zeta^{n-1}&\zeta^{2(n-1)}&\cdots & \zeta^{(n-1)^2}\\
\end{bmatrix}
\end{equation*}
where $\zeta = e^{2\pi \ii/n}$,
we can write
\begin{equation*}
H_X=F_n \begin{bmatrix}\theta_0&0&\cdots&0\\0&\theta_1&\cdots&0\\\vdots&\vdots&\ddots&\vdots\\0&0&\cdots&\theta_{n-1}\end{bmatrix} F_n^*.
\end{equation*}
For $0\leq a,b \leq n-1$, the vertices $x_a$ and $x_b$ are strongly cospectral with quarrel
\begin{equation}
\label{Eqn:XQ}
q_j(x_a, x_b) = \frac{2\pi j (b-a)}{n},
\end{equation}
for $j=0,1,\ldots,n-1$.

Theorem~22 of \cite{USTG} gives the following characterization of Hermitian circulants that have universal perfect state transfer.
\begin{theorem}
\label{Thm:USTCir}
Let $X$ be a Hermitian circulant on $n$ vertices with simple eigenvalues $\theta_0, \ldots, \theta_{n-1}$.
Then  $X$ has universal perfect state transfer if and only if there exist 
$\alpha, \beta \in \RR$ with $\beta>0$, $c_0,\ldots,c_{n-1} \in \ZZ$ and integer $h$ coprime with $n$ such that
\begin{equation*}
\theta_j = \alpha + \beta \left(jh+c_j n\right),
\end{equation*}
for $j=0,\ldots, n-1$.
\end{theorem}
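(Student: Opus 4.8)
The plan is to combine the characterization of perfect state transfer in Theorem~\ref{Thm:HermitianPST} with the explicit spectral data of a circulant. First I would record the structural facts: since $X$ is a circulant, $H_X=F_n\,\diag(\theta_0,\dots,\theta_{n-1})\,F_n^*$, and simplicity of the eigenvalues makes every spectral idempotent $E_j$ rank one. Computing $E_j e_{x_a}$ from the $j$-th column of $F_n$ gives $E_j e_{x_a}=e^{2\pi\ii j(b-a)/n}E_j e_{x_b}$ with $E_j e_{x_a}\neq 0$, so any two vertices are strongly cospectral, every vertex has full eigenvalue support, and the quarrels are exactly $q_j(x_a,x_b)=2\pi j(b-a)/n$ as in Equation~(\ref{Eqn:XQ}). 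Thus condition~(\ref{Cond:HermitionPST1}) of Theorem~\ref{Thm:HermitianPST} is automatic, and perfect state transfer from $x_a$ to $x_b$ amounts to the existence of a single time $\tau$ with $\tau(\theta_r-\theta_s)\equiv q_r(x_a,x_b)-q_s(x_a,x_b)\pmod{2\pi}$ for all $r,s$; equivalently, that $e^{\ii(q_r(x_a,x_b)-\tau\theta_r)}$ does not depend on $r$.

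For the ``if'' direction, assume $\theta_j=\alpha+\beta(jh+c_jn)$ with $\beta>0$ and $\gcd(h,n)=1$. Fix vertices $x_a,x_b$, pick an integer $h'$ with $hh'=1+kn$ for some integer $k$, and set $\tau=\frac{2\pi(b-a)h'}{n\beta}$. Then
\[
\tau(\theta_r-\theta_s)=\frac{2\pi(b-a)h'}{n}\bigl((r-s)h+(c_r-c_s)n\bigr)=\frac{2\pi(b-a)(r-s)}{n}(1+kn)+2\pi(b-a)h'(c_r-c_s),
\]
which equals $\frac{2\pi(b-a)(r-s)}{n}=q_r(x_a,x_b)-q_s(x_a,x_b)$ plus an integer multiple of $2\pi$. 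Hence $e^{\ii(q_r(x_a,x_b)-\tau\theta_r)}$ is a common phase $\gamma$, so $U(\tau)e_{x_a}=\sum_r e^{-\ii\tau\theta_r}E_re_{x_a}=\gamma\sum_r E_r e_{x_b}=\gamma e_{x_b}$; since $x_a,x_b$ were arbitrary, $X$ has universal perfect state transfer. (Equivalently, condition~(\ref{Cond:HermitionPST2}) of Theorem~\ref{Thm:HermitianPST} holds with $m_{r,s}=(b-a)(r-s)k+(b-a)h'(c_r-c_s)$.)

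For the ``only if'' direction, suppose $X$ has universal perfect state transfer; in particular (taking $n\geq 2$, the case $n\leq 1$ being trivial) there is perfect state transfer from $x_0$ to $x_1$ at some time $\tau\neq 0$. Strong cospectrality rewrites $U(\tau)e_{x_0}=\gamma e_{x_1}$ as $e^{\ii(q_r(x_0,x_1)-\tau\theta_r)}=\gamma$ for all $r$, so comparing index $r$ with index $0$ and using $q_0(x_0,x_1)=0$ and $q_r(x_0,x_1)=2\pi r/n$ yields integers $m_r$ (with $m_0=0$) such that $\tau(\theta_r-\theta_0)=\frac{2\pi}{n}(r+m_rn)$. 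If $\tau>0$, this is the claimed form with $\alpha=\theta_0$, $\beta=\frac{2\pi}{n\tau}$, $h=1$ and $c_r=m_r$. If $\tau<0$, use $-r=r(n-1)-rn$ to rewrite $\theta_r=\theta_0+\frac{2\pi}{n|\tau|}\bigl(r(n-1)+c'_rn\bigr)$ with $c'_r=-r-m_r\in\ZZ$ and $h=n-1$, which is coprime to $n$.

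Once Theorem~\ref{Thm:HermitianPST} and the circulant spectral picture are in hand, the argument is mostly bookkeeping; the one genuinely delicate point is arranging that a single evolution time $\tau$ works simultaneously for every pair of eigenvalues and every ordered pair of vertices, which is precisely where coprimality of $h$ and $n$ enters through the modular inverse $h'$, together with the sign analysis of $\tau$ (hence of $\beta$) that restricts $h$ to $1$ or $n-1$ rather than an arbitrary unit modulo $n$.
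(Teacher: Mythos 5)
Your proof is correct, but note that there is no proof in the paper to compare it against: Theorem~\ref{Thm:USTCir} is quoted verbatim as Theorem~22 of \cite{USTG} and used as a black box. What you have written is a legitimate, self-contained derivation from the paper's own machinery, and it is essentially the natural argument: the Fourier diagonalization makes every pair of vertices strongly cospectral with quarrels $q_j(x_a,x_b)=2\pi j(b-a)/n$ and full eigenvalue support, Theorem~\ref{Thm:HermitianPST} reduces perfect state transfer at time $\tau$ to the single-time congruences $\tau(\theta_r-\theta_s)\equiv q_r-q_s \pmod{2\pi}$, your choice $\tau=2\pi(b-a)h'/(n\beta)$ with $hh'\equiv 1\pmod n$ verifies these congruences for the sufficiency direction, and comparing each $\theta_r$ against $\theta_0$ extracts the arithmetic-progression form for necessity. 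Two small points of hygiene. First, in the sufficiency direction you should replace $b-a$ by its representative in $\{1,\dots,n-1\}$ (this changes each quarrel by a multiple of $2\pi$, hence changes nothing) so that $\tau>0$; as written $\tau$ can be negative, and the paper's definition of state transfer ``at time $\tau$'' is most naturally read with $\tau>0$. Second, the statement implicitly fixes the labelling of eigenvalues by the Fourier eigenvectors $F_ne_j$ (otherwise the form $\theta_j=\alpha+\beta(jh+c_jn)$ is not well posed under relabelling); you use that convention consistently, but it is worth saying so, since it is exactly what makes the index $j$ in the quarrel and the index $j$ in the eigenvalue formula the same $j$. That your necessity argument only ever produces $h=1$ or $h=n-1$ is not a defect: the statement is existential in $h$, and your sufficiency direction establishes the converse for arbitrary $h$ coprime to $n$.
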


To determine the spectrum of $Z=X\circ \lp{m}$, we consider the $m\times m$ Jacobi matrices
\begin{equation}
\label{Eqn:Tj}
T_j:=\begin{bmatrix}
\gamma & 1 & 0 & \cdots& 0 & 0\\
1&0&1&\cdots & 0& 0\\
0&1&0&\cdots & 0&0\\
\vdots&\vdots&\vdots&\ddots&\vdots&\vdots\\
0&0&0&\cdots &0& 1\\
0&0&0&\cdots &1&  \theta_j
\end{bmatrix},
\qquad \text{for $j=0, 1, \ldots,n-1.$}
\end{equation}
Let $\varphi_{j,0}=1$ and
let $\varphi_{j,r}(t)$ be the characteristic polynomial of the $r$-th leading principal submatrix of $T_j$, for $r=1,\ldots,m$.
Then  $\varphi_{j, 0}(t),\varphi_{j, 1}(t), \ldots, \varphi_{j, m}(t)$ is a sequence of orthogonal polynomials satisfying 
$\varphi_{j, 0}(t)=1$, 
$\varphi_{j, 1}(t)=t-\gamma$,
\begin{equation}
\label{Eqn:OP1}
\varphi_{j, r}(t)= t\ \varphi_{j, r-1}(t) - \varphi_{j, r-2}(t) 
\end{equation}
for $r=2,\ldots,m-1$, and
\begin{equation}
\label{Eqn:OP2}
\varphi_{j, m}(t)= \left(t-\theta_j\right) \varphi_{j, m-1}(t) - \varphi_{j, m-2}(t).
\end{equation}
From Lemma~8.5.2 of \cite{AC}, the roots $\lambda_{j,1}, \ldots, \lambda_{j,m}$ of $\varphi_{j,m}(t)=0$ are the eigenvalues of $T_j$.  Further, 
\begin{equation*}
\Phi_{j,s} = 
\begin{bmatrix}
1&\varphi_{j, 1}(\lambda_{j,s})& \ldots& \varphi_{j, m-1}(\lambda_{j,s})
\end{bmatrix}^T
\end{equation*}
is an eigenvector of $T_j$ corresponding to eigenvalue $\lambda_{j,s}$, for $s=1,\ldots,m$.
It follows from Lemma~8.1.1 of \cite{AC} that the eigenvalues of $T_j$ are simple.  It is also known that consecutive orthogonal polynomials do not have non-trivial common factor.

The Hermitian matrix of $Z$ is
\begin{equation}
\label{Eqn:HZ}
H_Z = 
\begin{bmatrix}
0 & 0 &  \cdots& 0 & 0\\
0&0&\cdots & 0& 0\\
\vdots&\vdots&\ddots&\vdots&\vdots\\
0&0&\cdots &0& 0\\
0&0&\cdots &0& 1\\
\end{bmatrix}
\otimes H_X
+
\begin{bmatrix}
\gamma & 1 &  \cdots& 0 & 0\\
1&0&\cdots & 0& 0\\
\vdots&\vdots&\ddots&\vdots&\vdots\\
0&0&\cdots &0& 1\\
0&0&\cdots &1& 0\\
\end{bmatrix}
\otimes I_n.
\end{equation}
Since $H_X F_n e_j = \theta_j F_ne_j$, we have
\begin{equation}
\label{Eqn:HZev}
H_Z \left(\Phi_{j,s} \otimes F_n e_j \right) = \lambda_{j,s} \left(\Phi_{j,s} \otimes F_n e_j \right)
\end{equation}
for $j=0,\ldots, n-1$ and $s=1,\ldots,m$. 

\begin{lemma}
\label{Lem:X_PmSpecDecomp}
Let $X$ be a Hermitian circulant with distinct eigenvalues $\theta_0, \theta_1, \ldots, \theta_n$ and let $F_n$, $\lambda_{j,s}$, and $\Phi_{j,s}$ be defined as above.
For $j=0,\ldots, n-1$ and $s=1,\ldots,m$,  $\lambda_{j,s}$ is a simple eigenvalue of the Hermitian graph $Z$ defined in Equation~(\ref{Eqn:HZ}), with spectral decomposition
\begin{equation*}
H_Z = \sum_{j=0}^{n-1} \sum_{s=1}^m \lambda_{j,s}  \frac{1}{\Vert \Phi_{j,s}\Vert^2} 
\left(\Phi_{j,s} \Phi_{j,s}^*\right) \otimes \left( (F_ne_j)(F_ne_j)^*\right).
\end{equation*}

For $x_a, x_b \in V(X)$ and $h=1,\ldots,m$, the vertices $(x_a, h)$ and $(x_b,h)$ are strongly cospectral in $Z$ with quarrel corresponding to eigenvalues $\lambda_{j,s}$ being
\begin{equation*}
q_{j,s}\left((x_a,h),(x_b,h)\right) = \frac{2\pi j (b-a)}{n},
\end{equation*}
for $j=0,\ldots, n-1$ and $s=1,\ldots,m$. 
\end{lemma}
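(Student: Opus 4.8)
The plan is to observe that the eigenvectors $\Phi_{j,s}\otimes F_ne_j$ already exhibited in Equation~(\ref{Eqn:HZev}) form a complete eigenbasis of $H_Z$, so that the substantive work reduces to (a) checking that the $\lambda_{j,s}$ are pairwise distinct and (b) tracking phases through the Kronecker structure. First I would fix $j$ and note that $T_j$ has $m$ simple eigenvalues $\lambda_{j,1},\dots,\lambda_{j,m}$ with eigenvectors $\Phi_{j,1},\dots,\Phi_{j,m}$, which therefore span $\mathbb{C}^m$; since $F_ne_0,\dots,F_ne_{n-1}$ is an orthonormal basis of $\mathbb{C}^n$, the $nm$ vectors $\Phi_{j,s}\otimes F_ne_j$ form a basis of $\mathbb{C}^{nm}$ (a dependence among them is killed by projecting the second tensor factor onto each $F_ne_j$). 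Together with Equation~(\ref{Eqn:HZev}), this says $H_Z$ is diagonalized by these vectors and that its spectrum, counted with multiplicity, is the list $\{\lambda_{j,s}\}$.

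Next I would prove the $\lambda_{j,s}$ are pairwise distinct; I expect this to be the main obstacle, and the one place the orthogonal-polynomial machinery is genuinely needed. Within a fixed $j$, distinctness is just the simplicity of $T_j$. For $j\neq j'$, the key point is that the recurrences~(\ref{Eqn:OP1}) make $\varphi_{j,r}$ independent of $j$ for $r\le m-1$, with $\theta_j$ entering only the final equation~(\ref{Eqn:OP2}); writing $\varphi_{j,m}(t)=(t-\theta_j)\varphi_{j,m-1}(t)-\varphi_{j,m-2}(t)$, a common root $\lambda$ of $\varphi_{j,m}$ and $\varphi_{j',m}$ would force $(\theta_{j'}-\theta_j)\varphi_{j,m-1}(\lambda)=0$, hence $\varphi_{j,m-1}(\lambda)=0$ and then $\varphi_{j,m-2}(\lambda)=0$, contradicting that consecutive orthogonal polynomials have no common zero. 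Once distinctness is established, each $\lambda_{j,s}$ is a simple eigenvalue of $H_Z$ whose spectral idempotent is the orthogonal projection onto the line spanned by $\Phi_{j,s}\otimes F_ne_j$; since $\Vert F_ne_j\Vert=1$ and $(u\otimes v)(u\otimes v)^*=(uu^*)\otimes(vv^*)$, this is exactly the displayed formula, with $\Vert\Phi_{j,s}\otimes F_ne_j\Vert^2=\Vert\Phi_{j,s}\Vert^2$.

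For the strong cospectrality statement I would identify the vertex $(x_a,h)$ of $Z$ with the basis vector $e_h\otimes e_a$ (the path index $h$ in the first tensor factor of Equation~(\ref{Eqn:HZ}), the circulant index $a$ in the second) and apply the idempotent $E_{j,s}$ of $\lambda_{j,s}$. This gives $E_{j,s}(e_h\otimes e_a)$ as a scalar multiple of $\Phi_{j,s}\otimes F_ne_j$, and the scalar factors as $\Phi_{j,s}^*e_h\cdot(F_ne_j)^*e_a$, where $\Phi_{j,s}^*e_h=\varphi_{j,h-1}(\lambda_{j,s})$ is a \emph{real} number (the $\varphi_{j,r}$ have real coefficients and $\lambda_{j,s}\in\RR$) and $(F_ne_j)^*e_a=\frac{1}{\sqrt{n}}\,e^{-2\pi\ii aj/n}$. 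Hence $E_{j,s}(e_h\otimes e_a)$ and $E_{j,s}(e_h\otimes e_b)$ are the same real multiple of $\Phi_{j,s}\otimes F_ne_j$ rescaled by $e^{-2\pi\ii aj/n}$ and $e^{-2\pi\ii bj/n}$ respectively; the two vanish together (precisely when $\varphi_{j,h-1}(\lambda_{j,s})=0$, i.e.\ when $\lambda_{j,s}$ lies outside the eigenvalue support of $(x_a,h)$), and when nonzero their ratio is $e^{2\pi\ii j(b-a)/n}$. This shows $(x_a,h)$ and $(x_b,h)$ are strongly cospectral with quarrel $q_{j,s}=2\pi j(b-a)/n$, which specializes at $h=m$ to Equation~(\ref{Eqn:XQ}); aside from the distinctness argument, everything is routine Kronecker-product and DFT bookkeeping.
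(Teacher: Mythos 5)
Your proposal is correct and follows essentially the same route as the paper: the paper's proof also reduces everything to showing the $\lambda_{j,s}$ are pairwise distinct, using that $\varphi_{j,r}$ is independent of $j$ for $r\leq m-1$ so that a common root of $\varphi_{j_1,m}$ and $\varphi_{j_2,m}$ forces $\theta_{j_1}=\theta_{j_2}$, and then reads off the idempotents and quarrels from the Kronecker structure. Your write-up is in fact slightly more complete, since you make explicit why $\varphi_{j,m-1}(\lambda)\neq 0$ (no common zero of consecutive orthogonal polynomials), a step the paper leaves implicit.
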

\begin{proof}
It is sufficient to show that the eigenvalues $\lambda_{j,s}$ of $Z$, for $j=0,\ldots, n-1$ and $s=1,\ldots,m$, are distinct.

Supoose $\lambda_{j_1,s_1} = \lambda_{j_2,s_2}$.  From Equation~(\ref{Eqn:OP1}), we have
\begin{equation*}
\varphi_{j_1,r}\left(\lambda_{j_1,s_1}\right) = \varphi_{j_2,r} \left(\lambda_{j_2,s_2}\right) ,
\end{equation*}
for $r=1,\ldots,m-1$.
From Equation~(\ref{Eqn:OP2}), $\varphi_{j_1,m}\left(\lambda_{j_1,s_1}\right) = \varphi_{j_2,m}\left(\lambda_{j_2,s_2}\right)=0$ implies $\theta_{j_1}=\theta_{j_2}$ and $j_1=j_2$.  Since $\varphi_{j_1,m}(t)=0$ has $m$ distinct roots, we conclude that $s_1=s_2$.

We get the quarrels of $Z$ directly from Equations~(\ref{Eqn:HZev})~and~(\ref{Eqn:XQ}).
\end{proof}

For the rest of this section, we assume that $\gamma$ is transcendental and $\theta_0,\theta_1,\ldots, \theta_{n-1}\in \QQ$ as in
Theorem~\ref{Thm:MPGSTrooted}.   Applying Laplace expansion along the first two rows of $T_j$ in Equation~(\ref{Eqn:Tj}) gives
\begin{equation*}
\varphi_{j,m}(t) = (t-\gamma)g_{n-1}(t) - g_{n-2}(t),
\end{equation*}
where $g_{n-1}(t)$ is the characteristic polynomial of the $(n-1)\times (n-1)$ Jacobi matrix
\begin{equation*}
\begin{pmatrix} 
\theta_j & 1 & \cdots& 0 & 0\\
1&0&\cdots & 0& 0\\
\vdots&\vdots&\ddots&\vdots&\vdots\\
0&0&\cdots &0& 1\\
0&0&\cdots &1& 0
\end{pmatrix},
\end{equation*}
and $g_{n-2}(t)$ is the characteristic polynomial of its $(n-2)$-th leading principal submatrix.
Now $g_{n-1}(t)$ and $g_{n-2}(t)$ are consecutive orthogonal polynomials, so they do not have any common factor of positive degree.  
Since $g_{n-1}(t)$ and $g_{n-2}(t)$ are rational polynomials and $\gamma$ is transcendental, we conclude that $\varphi_{j,m}(t)$ is irreducible over $\QQ(\gamma)$.  Then the splitting field $F_j$ of $\varphi_{j,m}(t)$ is a Galois extension over $\QQ(\gamma)$.

Given a Galois extension $E/K$, we use $\Tr_{E/K}(\mu)$ to denote the trace of $\mu$ from $E$ to $K$.  
Here are some properties of the trace map useful for the proof of Theorem~\ref{Thm:MPGSTrooted}.
\begin{theorem}
\label{Thm:Trace}
Let $E/K$ be a Galois extension. The following properties hold.
\begin{enumerate}[i.]
\item
\label{Prop:Trace1}
For $\mu \in E$, $\Tr_{E/K}(\mu) \in K$.
\item
\label{Prop:Trace2}
For $\mu \in K$, $\Tr_{E/K}(\mu)= [E\ :\ K]\mu$.
\item
\label{Prop:Trace3}
For $\mu_1, \mu_2 \in E$, $\Tr_{E/K}(\mu_1+\mu_2)=\Tr_{E/K}(\mu_1)+\Tr_{E/K}(\mu_2)$.
\item
\label{Prop:Trace4}
If $K\subset F\subset E$ are extension fields, then $\Tr_{E/K}(\mu)= \Tr_{F/K}\left(\Tr_{E/F}(\mu)\right)$.
\item
\label{Prop:Trace5}
If the minimal polynomial of $\mu \in E$ over $K$ is $t^m+a_{m-1}t^{m-1}+\cdots+c_0$ then
\begin{equation*}
\Tr_{E/K}(\mu) = -\frac{[E\ :\ K]}{m} a_{m-1}.
\end{equation*}
\end{enumerate}
\end{theorem}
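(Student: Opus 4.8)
The plan is to work throughout with the Galois-theoretic description of the trace: for a Galois extension $E/K$ with group $G=\Gal(E/K)$ and $\mu\in E$, take $\Tr_{E/K}(\mu)=\sum_{\sigma\in G}\sigma(\mu)$. (This coincides with the trace of the $K$-linear multiplication-by-$\mu$ map on $E$, but the group-sum form is what makes the argument clean.) With this in hand, properties~\ref{Prop:Trace1}--\ref{Prop:Trace3} are immediate. For~\ref{Prop:Trace1}: any $\tau\in G$ permutes $G$ under left multiplication, so $\tau\bigl(\Tr_{E/K}(\mu)\bigr)=\sum_{\sigma\in G}(\tau\sigma)(\mu)=\Tr_{E/K}(\mu)$; since $E/K$ is Galois, the subfield fixed pointwise by $G$ is exactly $K$, so $\Tr_{E/K}(\mu)\in K$. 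Property~\ref{Prop:Trace2} holds because each $\sigma\in G$ fixes $\mu\in K$, so the sum is $|G|\mu=[E:K]\mu$. Property~\ref{Prop:Trace3} is just distributing each $\sigma$ over the sum $\mu_1+\mu_2$ and reindexing.

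The substantive step is transitivity, property~\ref{Prop:Trace4}. I would first record the structural facts that $E/K$ Galois implies $E/K$ is separable and normal, hence $E/F$ is Galois and $F/K$ is separable, with $E$ containing a normal closure of $F/K$; consequently the $[F:K]$ distinct $K$-embeddings $\sigma_1,\dots,\sigma_r$ of $F$ into $E$ each extend to automorphisms $\tilde\sigma_i\in G$, and $G$ is the disjoint union of the cosets $\tilde\sigma_i\,\Gal(E/F)$, $i=1,\dots,r$. Grouping the sum over $G$ according to these cosets,
\begin{equation*}
\Tr_{E/K}(\mu)=\sum_{i=1}^{r}\sum_{\tau\in\Gal(E/F)}\tilde\sigma_i\bigl(\tau(\mu)\bigr)=\sum_{i=1}^{r}\tilde\sigma_i\Bigl(\sum_{\tau}\tau(\mu)\Bigr)=\sum_{i=1}^{r}\sigma_i\bigl(\Tr_{E/F}(\mu)\bigr)=\Tr_{F/K}\bigl(\Tr_{E/F}(\mu)\bigr),
\end{equation*}
where the third equality uses $\Tr_{E/F}(\mu)\in F$ (property~\ref{Prop:Trace1} applied to $E/F$), so that $\tilde\sigma_i$ acts on it through its restriction $\sigma_i$, and the last uses the definition of $\Tr_{F/K}$ on $F$. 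I expect this to be the only delicate point: $F/K$ itself need not be Galois, so the argument must route through the coset decomposition of $G$ rather than through a ``Galois group of $F/K$'' directly; the separability/normality bookkeeping is exactly what licenses that decomposition.

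Finally, property~\ref{Prop:Trace5} follows by combining~\ref{Prop:Trace4} with the degree-$m$ case. Put $L=K(\mu)$, so $[L:K]=m$ and, by separability, the minimal polynomial $t^m+a_{m-1}t^{m-1}+\cdots+c_0$ has $m$ distinct roots, which are precisely the images $\sigma_i(\mu)$ of $\mu$ under the $K$-embeddings of $L$ into $E$; hence $\Tr_{L/K}(\mu)=\sum_i\sigma_i(\mu)=-a_{m-1}$ by Vieta's formula for the sum of the roots. Then
\begin{equation*}
\Tr_{E/K}(\mu)=\Tr_{L/K}\bigl(\Tr_{E/L}(\mu)\bigr)=\Tr_{L/K}\bigl([E:L]\,\mu\bigr)=[E:L]\cdot\Tr_{L/K}(\mu)=-\frac{[E:K]}{m}\,a_{m-1},
\end{equation*}
using~\ref{Prop:Trace4}, then~\ref{Prop:Trace2} for the extension $E/L$, then $[E:L]=[E:K]/m$. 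All of this is standard field theory (cf.\ Chapter~14 of \cite{df}); the plan is simply to assemble it in the order \ref{Prop:Trace1}--\ref{Prop:Trace3}, then \ref{Prop:Trace4}, then \ref{Prop:Trace5}, with the transitivity step carrying essentially all the content.
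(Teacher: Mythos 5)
Your proof is correct, but there is nothing in the paper to compare it against: the authors state Theorem~\ref{Thm:Trace} without proof, treating all five properties as standard facts and pointing the reader to Chapter~14 of \cite{df} (and to \cite{kly17}) for the background on field traces. What you have written is the standard Galois-theoretic derivation, and it is assembled in the right order: properties~(\ref{Prop:Trace1})--(\ref{Prop:Trace3}) fall out of the definition $\Tr_{E/K}(\mu)=\sum_{\sigma\in\Gal(E/K)}\sigma(\mu)$ together with the fact that the fixed field of the full Galois group is $K$; transitivity~(\ref{Prop:Trace4}) is correctly routed through the coset decomposition of $\Gal(E/K)$ by $\Gal(E/F)$, and you rightly flag that $F/K$ need not be Galois, so that $\Tr_{F/K}$ must be read as the sum over the $[F:K]$ distinct $K$-embeddings of $F$ into $E$ (which exist and land in $E$ because $E/K$ is normal and separable) --- it is worth stating explicitly that this embedding-sum is your definition of the trace for the intermediate, possibly non-normal, extension, since otherwise (\ref{Prop:Trace4}) is comparing two differently defined objects; and~(\ref{Prop:Trace5}) correctly combines~(\ref{Prop:Trace4}), (\ref{Prop:Trace2}) applied to $E/K(\mu)$, and Vieta's formula for the sum of the conjugates of $\mu$. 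This is exactly the argument the cited reference supplies, so your write-up fills in a proof the paper deliberately omits rather than diverging from one it gives.
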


The eigenvalue $\lambda_{j,s}$ of $X\circ \lp{m}$ has minimal polynomial $\varphi_{j,m}(t)$ over $\QQ(\gamma)$.
Applying Property~(\ref{Prop:Trace5}) to $\lambda_{j,s}\in F_j$, Equation~(\ref{Eqn:OP2}) gives
\begin{equation}
\label{Eqn:TraceFj}
\Tr_{F_j/\QQ(\gamma)}(\lambda_{j,s}) = \frac{[F_j\ :\ \QQ(\gamma)]}{m} \left(\gamma + \theta_j\right).
\end{equation}
Consider the smallest extension field $M$ of $F_j$ that contains $F_0, \ldots, F_{n-1}$.  For $j=0,\ldots,n-1$, $M/F_j$
is a Galois extension.   It follows from Properties~(\ref{Prop:Trace2}) and (\ref{Prop:Trace4}) and Equation~(\ref{Eqn:TraceFj})
that
\begin{equation}
\label{Eqn:TraceM}
\Tr_{M/\QQ(\gamma)}(\lambda_{j,s}) = \Tr_{F_j/\QQ(\gamma)}\left([M\ :\ F_j]\lambda_{j,s}\right)=
[M\ :\ F_j] \frac{ [F_j\ :\ \QQ(\gamma)]}{m} \left(\gamma + \theta_j\right)
= \frac{[M\ :\ \QQ(\gamma)]}{m} \left(\gamma + \theta_j\right).
\end{equation}

\begin{theorem}
\label{Thm:MPGSTrooted}
Let $X$ be a Hermitian circulant on $n$ vertices that admits universal perfect state transfer with eigenvalues given in Theorem~\ref{Thm:USTCir}.
If $\theta_0, \ldots, \theta_{n-1}\in \QQ$ and $\gamma$ is transcendental then, for any positive integer $m$, the rooted product $X \circ \lp{m}$ has multiple pretty good state transfer on the set $\{(x_0,h), (x_1,h), \ldots, (x_{n-1},h)\}$, for $1\leq h \leq m$.
\end{theorem}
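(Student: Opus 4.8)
The plan is to verify the two conditions of Theorem~\ref{Thm:PGST} for an arbitrary pair of distinct vertices $(x_a,h)$ and $(x_b,h)$ of $Z=X\circ\lp{m}$ (with $h$ fixed in $\{1,\dots,m\}$), taking the shift parameter to be $\delta=0$. Condition~(\ref{Cond:PGST1}), strong cospectrality, is exactly the content of Lemma~\ref{Lem:X_PmSpecDecomp}, which also supplies the quarrels $q_{j,s}\big((x_a,h),(x_b,h)\big)=2\pi j(b-a)/n$ for every eigenvalue $\lambda_{j,s}$ in the common eigenvalue support. So the whole proof comes down to condition~(\ref{Cond:PGST2}): given integers $l_{j,s}$ (one for each $\lambda_{j,s}$ in that support) with $\sum_{j,s}l_{j,s}\lambda_{j,s}=0$, show that $\sum_{j,s}l_{j,s}\,q_{j,s}\big((x_a,h),(x_b,h)\big)\equiv 0\pmod{2\pi}$.

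The key step is to apply the field trace $\Tr_{M/\QQ(\gamma)}$ to the relation $\sum_{j,s}l_{j,s}\lambda_{j,s}=0$. Each $\lambda_{j,s}$ lies in $F_j\subseteq M$, so by additivity of the trace (Property~(\ref{Prop:Trace3}) of Theorem~\ref{Thm:Trace}) and Equation~(\ref{Eqn:TraceM}),
\begin{equation*}
0=\Tr_{M/\QQ(\gamma)}\!\left(\sum_{j,s}l_{j,s}\lambda_{j,s}\right)=\frac{[M:\QQ(\gamma)]}{m}\sum_{j,s}l_{j,s}\,(\gamma+\theta_j).
\end{equation*}
Setting $L_j:=\sum_s l_{j,s}$ and dividing by the nonzero rational $[M:\QQ(\gamma)]/m$, this reads $\gamma\big(\sum_j L_j\big)+\sum_j L_j\theta_j=0$ in $\QQ(\gamma)$. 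Since $\gamma$ is transcendental, $\{1,\gamma\}$ is linearly independent over $\QQ$, and both coefficients are rational (this is where the hypothesis $\theta_j\in\QQ$ enters), so $\sum_j L_j=0$ and $\sum_j L_j\theta_j=0$.

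Now I would substitute the explicit eigenvalues $\theta_j=\alpha+\beta(jh+c_jn)$ from Theorem~\ref{Thm:USTCir}. Using $\sum_j L_j=0$ to eliminate the $\alpha$ term and $\beta>0$ to cancel the $\beta$, the second identity becomes $h\sum_j jL_j+n\sum_j c_jL_j=0$, so $n\mid h\sum_j jL_j$, and since $\gcd(h,n)=1$ we conclude $n\mid\sum_j jL_j$. Consequently
\begin{equation*}
\sum_{j,s}l_{j,s}\,q_{j,s}\big((x_a,h),(x_b,h)\big)=\frac{2\pi(b-a)}{n}\sum_j jL_j\in 2\pi\ZZ,
\end{equation*}
and because $\delta\sum_{j,s}l_{j,s}=\delta\sum_j L_j=0$ as well, condition~(\ref{Cond:PGST2}) of Theorem~\ref{Thm:PGST} holds with $\delta=0$. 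Hence there is pretty good state transfer from $(x_a,h)$ to $(x_b,h)$; as $a,b$ were arbitrary, $Z$ has multiple pretty good state transfer on $\{(x_0,h),\dots,(x_{n-1},h)\}$.

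The real work is in setting up the trace machinery rather than in the final bookkeeping: one needs that all the $\lambda_{j,s}$ lie inside the single Galois extension $M$, that $\varphi_{j,m}$ is genuinely the minimal polynomial of $\lambda_{j,s}$ over $\QQ(\gamma)$ (irreducibility, obtained from the transcendence of $\gamma$ together with the coprimality of the consecutive orthogonal polynomials $g_{n-1}$ and $g_{n-2}$), and the clean evaluation of the trace in Equation~(\ref{Eqn:TraceM})---all of which the excerpt already establishes. Once that is in place, everything collapses onto the two scalar facts $\sum_j L_j=0$ and $n\mid\sum_j jL_j$, and the quarrel computation finishes the argument.
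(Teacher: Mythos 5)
Your proposal is correct and follows essentially the same route as the paper: Lemma~\ref{Lem:X_PmSpecDecomp} for strong cospectrality and quarrels, then the trace $\Tr_{M/\QQ(\gamma)}$ applied to $\sum_{j,s}l_{j,s}\lambda_{j,s}=0$, transcendence of $\gamma$ to split off $\sum_j L_j=0$ and $\sum_j L_j\theta_j=0$, and the form of the $\theta_j$ from Theorem~\ref{Thm:USTCir} with $\gcd(h,n)=1$ to get $n\mid\sum_j jL_j$. The only cosmetic differences are that the paper invokes the cyclic automorphism to reduce to the pair $(x_0,h),(x_1,h)$ while you treat a general pair directly, and you fix $\delta=0$ where the paper allows arbitrary $\delta$.
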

\begin{proof}
For $h=1,\ldots,m$,
$X\circ \lp{m}$ has an automorhism that maps $(x_a,h)$ to $(x_{a+1},h)$, for $a\in \ZZ_n$.  It is sufficient
to show that there is pretty good state transfer from $(x_0,h)$ to $(x_1,h)$.
By Lemma~\ref{Lem:X_PmSpecDecomp}, $(x_0,h)$ and $(x_1,h)$ are strongly cospectral with quarrels
\begin{equation*}
q_{j,s}\left((x_0,h),(x_1,h)\right) = \frac{2\pi j}{n},
\end{equation*}
for $j=0,\ldots,n-1$ and $s=1,\ldots,m$.

To show the Theorem~\ref{Thm:PGST}~(\ref{Cond:PGST2}) holds, consider integers $l_{j,s}$'s satisfying
\begin{equation}
\label{Eqn:MPGSTrooted1}
\sum_{j=0}^{n-1} \sum_{s=1}^m l_{j,s} \lambda_{j,s}=0.
\end{equation}
We apply the trace  from $M$ to $\QQ(\gamma)$ to both sides.
Applying Theorem~\ref{Thm:Trace}~(\ref{Prop:Trace3}) and Equation~(\ref{Eqn:TraceM}), 
Equation~(\ref{Eqn:MPGSTrooted1}) implies
\begin{equation*}
\sum_{j=0}^{n-1} \sum_{s=1}^m l_{j,s} (\gamma + \theta_j) = 
\gamma \left(\sum_{j=0}^{n-1} \sum_{s=1}^m l_{j,s}\right) + \sum_{j=0}^{n-1} \theta_j \left(\sum_{s=1}^m l_{j,s} \right)
=0. 
\end{equation*}
Since $\gamma$ is transcendental and $\sum_j \theta_j \left(\sum_s l_{j,s} \right)\in \QQ$, Equation~(\ref{Eqn:MPGSTrooted1}) is equivalent to
\begin{equation}
\label{Eqn:MPGSTrooted2a}
\sum_{j=0}^{n-1} \sum_{s=1}^m l_{j,s}=0
\end{equation}
and
\begin{equation}
\label{Eqn:MPGSTrooted2b}
\sum_{j=0}^{n-1} \theta_j \left(\sum_{s=1}^m l_{j,s} \right)=0.
\end{equation}
Recall $\theta_j = \alpha+\beta(jh+c_jn)$ where $\gcd(h,n)=1$.  Equations~(\ref{Eqn:MPGSTrooted2a})
and (\ref{Eqn:MPGSTrooted2b}) imply
\begin{equation*}
\sum_{j=0}^{n-1} (jh+c_jn) \left(\sum_{s=1}^m l_{j,s} \right)=0.
\end{equation*}
Since $\gcd(h,n)=1$, we have
\begin{equation*}
\sum_{j=0}^{n-1} j\sum_{s=1}^m l_{j,s}  = 0 \pmod{n}.
\end{equation*}
If Equations~(\ref{Eqn:MPGSTrooted2a})
and (\ref{Eqn:MPGSTrooted2b}) hold then, for any $\delta \in \RR$,
\begin{equation*}
\sum_{j=0}^{n-1}\sum_{s=1}^m l_{j,s}\left(q_{j,s}\left((x_0,h), (x_1,h)\right) + \delta\right)
=  \frac{2\pi}{n}\left(\sum_{j=0}^{n-1} j\sum_{s=1}^m l_{j,s} \right) + \delta \left(\sum_{j=0}^{n-1}\sum_{s=1}^m l_{j,s}\right)
= 0 \pmod{2\pi}.
\end{equation*}
By Theorem~\ref{Thm:PGST}, pretty good state transfer occurs from $(x_0,h)$ to $(x_1,h)$, for $h=1,\ldots,m$.
\end{proof}

\begin{remark}
\ 
\begin{itemize}
\item
Putting a transcendental weight $\gamma$ on the loops is sufficient for $\varphi_{0,m}(t), \ldots, \varphi_{n-1,m}(t)$ to be irreducible over $\QQ(\gamma)$.  
Theorem~5.8 holds for irrational number $\gamma$ as long as $\varphi_{0,m}(t), \ldots, \varphi_{n-1,m}(t)$ are irreducible over $\QQ(\gamma)$.
\item
If we move the loops from the $(x_a,1)$ to $(x_a,m)$, for $a=0,\ldots,n-1$, then a similar argument shows that
the resulting graph has multiple pretty good state transfer on the set $\{(x_0,h), (x_1,h), \ldots, (x_{n-1},h)\}$, for $h=1,\ldots,m$.
\end{itemize}
\end{remark}
\section*{Acknowledgements}

This project was completed under the 2021 Fields Undergraduate Summer Research Program which provided support for A. Acuaviva, S. Eldridge, M. How and E. Wright.  
C. Godsil gratefully acknowledges the support of the Natural Sciences and Engineering Council of Canada (NSERC)
Grant No. RGPIN-9439.  A. Chan is grateful for the support of the NSERC Grant No. RGPIN-2021-03609.
\printbibliography

\end{document}